\def\:{{\colon}}
\def\into{\hookrightarrow}
\def\toisom{\widetilde{\to}}
\def\.{,\dots ,}
\def\wt{\widetilde}
\def\wh{\widehat}
\def\ol{\overline}
\def\wtimes{\wh{\otimes}}
\def\Res{{\rm Res}}
\def\Coh{{\rm Coh}}
\def\Spf{{\rm Spf}}
\def\Rad{{\rm Rad}}
\def\Spa{{\rm Spa}}
\def\Spec{{\rm Spec}}
\def\Proj{{\rm Proj}}
\def\Frac{{\rm Frac}}
\def\Bl{{\rm Bl}}
\def\ad{{\rm ad}}
\def\an{{\rm an}}
\def\Ker{{\rm Ker}}
\def\Pic{{\rm Pic}}
\def\bfA{{\bf A}}
\def\bfP{{\bf P}}
\def\bfR{{\bf R}}
\def\bfZ{{\bf Z}}
\def\gtI{{\mathfrak I}}
\def\gtJ{{\mathfrak J}}
\def\gtX{{\mathfrak X}}
\def\gtY{{\mathfrak Y}}
\def\bbA{{\mathbb A}}
\def\bbC{{\mathbb C}}
\def\calA{{\mathcal A}}
\def\calB{{\mathcal B}}
\def\calC{{\mathcal C}}
\def\calD{{\mathcal D}}
\def\calF{{\mathcal F}}
\def\calH{{\mathcal H}}
\def\calM{{\mathcal M}}
\def\calO{{\mathcal O}}
\def\calW{{\mathcal W}}
\def\calX{{\mathcal X}}
\def\oU{{\ol U}}
\def\oW{{\ol W}}
\def\oX{{\ol X}}
\def\oZ{{\ol Z}}
\def\of{{\ol f}}
\def\tilX{{\wt X}}
\def\tilx{{\wt x}}
\def\hatA{{\wh A}}
\def\hatX{{\wh X}}
\def\hatY{{\wh Y}}
\def\hatcalF{{\wh\calF}}
\def\kcirc{k^\circ}
\def\Kcirc{K^\circ}
\def\alp{{\alpha}}
\def\veps{\varepsilon}
\def\wHx{{\widetilde{\calH(x)}}}
\def\R+*{{\bf R^*_+}}
\def\ur{{\underline{r}}}
\def\us{{\underline{s}}}
\def\ut{{\underline{t}}}
\def\uf{{\underline{f}}}
\def\uT{{\underline{T}}}
\def\ua{{\underline{a}}}
\newtheorem{theor}[subsubsection]{Theorem}
\newtheorem{prop}[subsubsection]{Proposition}
\newtheorem{lem}[subsubsection]{Lemma}
\theoremstyle{definition}
\newtheorem{defin}[subsubsection]{Definition}
\newtheorem{rem}[subsubsection]{Remark}
\newtheorem{exam}[subsubsection]{Example}
\begin{document}
\title{Berkovich spaces and tubular descent}
\author{Oren Ben-Bassat, Michael Temkin}
\thanks{
The authors would like to thank the anonymous referee for the detailed comments and corrections in response to the initial submission. The first author would like to thank the University of Pennsylvania and the University of Haifa for travel support, as well as Jonathan Block, Ron Donagi, David Harbater, and Tony Pantev for helpful discussions. The second author would like to thank V. Drinfeld for the invitation to visit the University of Chicago and his support, and for informing him about the correspondence with V. Berkovich. The work of M.T. was partially supported by ISF grant No. 1018/11.}
\email{\scriptsize{ben-bassat@math.haifa.ac.il, temkin@math.huji.ac.il}}

\begin{abstract} We consider an algebraic variety $X$ together with the choice of a subvariety $Z$.  We show that any coherent sheaf on $X$ can be constructed out of a coherent sheaf on the formal neighborhood of $Z$, a coherent sheaf on the complement of $Z$, and an isomorphism between certain representative images of these two sheaves in the category of coherent sheaves on a  Berkovich analytic space $W$ which we define.
\end{abstract}

\keywords{Tubular descent, Berkovich analytic spaces}

\maketitle

\section{Introduction}

The aim of this paper is to study how a coherent sheaf $\calF$ on a $k$-scheme of finite type $X$ can be reconstructed from its restrictions to an open subvariety $U$ and the complement $Z=X-U$, or informally speaking, how $\calF$ can be glued from $\calF_U$ and $\calF_Z$. Clearly, $\calF_U$ and $\calF_Z$ do not determine $\calF$, so one should suggest a richer descent datum which does determine $\calF$ but uses as little as possible information beyond the pair $(\calF_U,\calF_Z)$.

Note that at the very least one should know the restrictions of $\calF$ onto all closed subschemes whose reduction is $Z$, or equivalently one should know the formal completion $\hatcalF_Z$ of $\calF$ along $Z$. Furthermore, even the knowledge of $\hatcalF_Z$ and $\calF_U$ is insufficient, so we will introduce an intermediate geometric space $W$ that will play the role of the ``intersection" of $\gtX=\hatX_Z$ and $U$, and we will construct ``restriction" functors $\Coh(U)\to\Coh(W)$ and $\Coh(\gtX)\to\Coh(W)$. Then our main result can be formulated in the very intuitive way that $$\Coh(X)\toisom\Coh(U)\times_{\Coh(W)}\Coh(\gtX)$$ For reasons explained below we call this type of descent {\em tubular descent}. We remark that the essentially new case is when $Z$, in some sense, has a non-trivial global geometry with respect to $X$, e.g. $Z$ is a curve with a positive self-intersection in $\bfP^2$ (see \S\ref{relatedsec}).

\subsection{Choice of $W$}
Before explaining what $W$ is, let us consider the case that $k=\bbC$. Then one often considers a tubular neighborhood $T_\veps$ of $Z$.  It is an open neighborhood of $Z$ in the classical analytic topology and is contractible to $Z$.  The gluing of sheaves can then be performed along the punctured tubular neighborhood $T_\veps-Z$. Having this case in mind, one can view $\gtX$ as an (infinitesimal) algebraic version of a tubular neighborhood and wonder if a ``punctured formal scheme $\gtX-Z$" (or a ``generic fiber") can be meaningfully defined. It was discovered by Tate and Grothendieck that a generic fiber of a formal scheme can be defined in some cases as a non-archimedean analytic space. Today there are three different theories of such spaces: Tate's rigid spaces, Berkovich analytic spaces, and Huber's adic spaces. We choose to define $W$ as a Berkovich $k$-analytic space, where the valuation on $k$ is trivial, but other alternatives are possible and will be briefly discussed in \S\ref{othersec}. Note also that this choice is inessential on the categorical level, i.e. other choices would lead to the same category $\Coh(W)$.\footnote{When the paper was already in press and the second author reported its results at an Oberwolfach conference on Berkovich spaces, he learned from Sam Payne that an instance of the space $W$ introduced in this article was already known and used in a spectacular fashion by Amaury Thuillier in \cite{Th}. The main result \cite[Th. 4.10]{Th} states that if $k$ is perfect, $X$ is regular and proper, and $Z$ is a normal crossings divisor, then the topological type of the simplicial complex $\Delta(Z)$ depends only on $U$ and not on its compactification $U\into X$. In his proof he constructs a deformation retract $W\to\Delta(Z)$ and shows that $W$ depends only on $U$ when $X$ is proper (actually, what was used was Berkovich's space $U_\infty$ from Remark \ref{infrem}). Here are some differences in terminology and notation between this article and that of Thuillier: our tubular neighborhood (or generic fiber) $\gtX_\eta$ is denoted in \cite{Th} by $\gtX^\beth$, and our punctured tubular neighborhood $W$ is denoted in \cite{Th} by $\gtX_\eta$ and called the generic fiber of $\gtX$.}

\subsection{Overview of the paper}
We recall basic facts about Berkovich analytic spaces in \S\ref{constrsec}. In particular, in \S\ref{gensec} we recall Berkovich's generic fiber construction which associates to a {\em special} (i.e. topologically finitely presented) formal $k$-scheme $\gtX$ a $k$-analytic space $\gtX_\eta$, where $k$ is a trivially valued field. Then we establish an equivalence $\Coh(\gtX)\toisom\Coh(\gtX_\eta)$. In \S\ref{descsec} we introduce the aforementioned space $W=X_\eta-U_\eta-Z_\eta$ (where $X, Z$ and $U$ are viewed as formal schemes with the zero ideal of definition). To explain our definition of $W$ note that $X_\eta-U_\eta\toisom\gtX_\eta$ is a tubular neighborhood of $Z_\eta$, so $W$ is a punctured tubular neighborhood of $Z_\eta$. We conclude \S\ref{constrsec} with observing that since $X_\eta-U_\eta\toisom\gtX_\eta$, our main result reduces to the following key lemma:
\begin{equation}\label{eq1}
\Coh(U)\toisom\Coh(X_\eta-Z_\eta).
\end{equation}
The latter is formulated as Lemma \ref{equivlem} but its proof is postponed until \S\ref{equivsec}.

If $X$ is proper then $X_\eta\toisom X^\an$ and $Z_\eta\toisom Z^\an$, and the equivalence (\ref{eq1}) can be easily deduced from GAGA because $W\toisom X^\an-Z^\an-U_\eta=U^\an-U_\eta$. Furthermore, we see that $W$ depends only on $U$ in this case. Although it will not be used in dealing with the general case, we work out this case separately in \S\ref{tubpropsec}.

\begin{rem}\label{infrem}
The space $U_\infty=U^\an-U_\eta$ was introduced by Berkovich in 2004 in his correspondence \cite{BeD} with Drinfeld. We will outline this correspondence in \S\ref{propersec}, and will explain why $U_\infty$ can be viewed as the ``infinity" of $U$ (in Drinfeld's terminology). We would like to stress that our first construction of tubular descent was in the proper case only and used Berkovich's definition of $W=U_\infty$.
\end{rem}

In the general case, the equivalence (\ref{eq1}) is established in \S\ref{equivsec} by a straightforward computation. Then we describe the structure of $W$ in a deeper way. In particular, we show that $W$ is glued from what we call $(0,1)$-affinoid spaces (topologically they look like a product of an affinoid domain with an interval $(0,1)$), and show how such coverings can be used to work with sheaves on $W$ in an abstract algebraic way. Also, we outline alternative definitions of $W$ in \S\ref{othersec}. We conclude the paper with some examples and discussion in \S\ref{ExampleDiscuss}.

\subsection{Related descent problems}\label{relatedsec}
Fpqc descent of Grothendieck is the most standard algebro-geometric descent tool (originally appeared in a series of Bourbaki seminars, see also \cite{Des}). Note also that already in 1959 Amitsur introduced in \cite{Am} a complex useful for computing certain cohomology groups, a kind of algebraic version of the \u{C}ech complex in topology which starts with a faithfully flat map of rings.

Returning to our tubular descent problem for varieties $Z\into X$, note that if $Z$ is affine then $\gtX$ is affine, say $\gtX=\Spf(A)$, and there is a global trick that allows to pass from formal schemes to the world of schemes, where generic fibers pose no difficulties. Indeed, the equivalence $\Coh(\gtX)\toisom\Coh(A)\toisom\Coh(\calX)$, where $\calX=\Spec(A)$ and $\Coh(A)$ denotes the category of finitely generated $A$-modules, can be used to construct such a gluing by use of the usual fpqc descent. Namely, $U\coprod\calX\to X$ is an fpqc covering and, using that $U\to X$ is a monomorphism, the usual fpqc descent reduces to giving sheaves $\calF_\calX\in\Coh(\calX)$ and $\calF_U\in\Coh(U)$ together with an isomorphism $\calF_U|_\calW\toisom\calF_\calX|_\calW$, where $\calW=U\times_X\calX$, and an isomorphism $\phi$ between the pullbacks of $\calF_\calX$ to $\calX\times_X\calX$ subject to the usual triple cocycle relation. It is a result of Artin that in our case one can ignore $\phi$, and so the descent statement simplifies to the equivalence of categories $$\Coh(X)\toisom\Coh(U)\times_{\Coh(\calW)}\Coh(\calX).$$ Moreover, Artin's descent theorem \cite[2.2]{A} extends this to the following case: one only assumes that $X$ is of finite type over an {\em affine} noetherian scheme $X_0$ so that $Z$ is the preimage of a closed subset $Z_0\subseteq X_0$. Note also that \cite[4.2]{FR} generalizes Artin's descent as mentioned in the comment \cite[p. 98]{A}.

Another construction of descent is given in \cite{BL2}. It eliminates the noetherian assumption but imposes the stronger affineness condition that $X$ is affine and $U=X_f$ is its localization. Then Beauville and Laszlo establish descent for quasi-coherent modules satisfying certain $f$-regularity conditions. This required to carefully study the completion, which does not have to be flat in the non-noetherian setting. One attempt (suggested in \cite{BL2}) to extend this descent to the non-affine situation involves considering the $\calO_X$-algebra $\calO_\gtX = \varprojlim\mathcal{O}_{X}/\mathcal{I}^{m}_{Z}$, where $\mathcal{I}_Z$ is the ideal sheaf of $Z$.  One would like to take the relative Spec over $X$ of this algebra in order to produce a scheme mapping via an affine morphism to $X$.  Unfortunately,  $\calO_\gtX$ is not quasi-coherent as a sheaf of $\calO_X$-modules.  Therefore the technique of the relative Spec cannot be applied and one cannot bypass the theory of formal schemes in such a straightforward way.

Prior to writing this paper, the first author tried to salvage the situation by replacing the sheaf $\calO_\gtX$ with its coherator. This approach worked fine when $\calO_\gtX(Z)$ was large enough (e.g. for a contractible curve on a surface), but broke down in the most interesting cases, such as a curve with a positive self-intersection on $X=\bfP^2_k$. Our current approach with tubular descent does not distinguish cases, so it seems to be the most natural one, and its main achievement is in treating the cases when $Z$ has ``non-trivial global geometry in $X$", in particular, when it is not affine and, moreover, cannot be contracted to an affine variety by a birational map $X\to X'$ which is an isomorphism on $U$. See Section \ref{ExampleDiscuss} for more on these examples.

\subsection{Applications}
We now mention some possible applications of the descent we study. Harbater developed versions of patching, which is also a type of descent, in order to construct covers with given Galois groups.  A survey of patching using either formal geometry or rigid geometry can be found in \cite{H}. As we already noted, over $\bbC$ one can use tubular patching, and finding a technique which works over arbitrary fields was one of our motivations in writing this paper.

One typical historical use of descent involves an algebraic curve $X$,  a formal disc which is a formal neighborhood of a $z$ point on the curve, the complement of the point in the curve, and a punctured formal disc.  This type of descent for the map $\Spec(\widehat{\mathcal{O}_{X,z}}) \coprod (X-\{z\} ) \to X$ was proven by Beauville and Laszlo in \cite{BL1} in order to prove the Verlinde formula and study conformal blocks following work of Faltings \cite{F}.  This description has found use in the Geometric Langlands program initiated by Beilinson and Drinfeld.  Our approach replaces the picture of the formal disc mapping to  a curve with a higher dimensional one, such as a formal neighborhood of a curve mapping to a surface. We hope that such a version of descent may be helpful for extending the Geometric Langlands program to higher dimensions.  The possibility of such an extension has been mentioned by several authors; see for instance \cite{K}.

\subsection{Conventions}\label{convsec}
By $\uT$ we will often denote a tuple $(T_1\. T_n)$. Given a field $K$, by a $K$-variety we mean a $K$-scheme of finite type. By an {\em analytic field} we mean a field $K$ provided with a (multiplicative) non-archimedean real valuation $|\ |\:K\to\bfR_+$ such that $K$ is complete with respect to $|\ |$. The ring of integers of $K$ is denoted by $\Kcirc$, its maximal ideal is denoted by $K^{\circ \circ}$ and the residue field is denoted by $\tilde{K}=\Kcirc/K^{\circ \circ}$. Unlike the usual restriction (e.g. in rigid geometry), the case of trivial valuation is more than welcome -- it will be heavily used in the applications. We reserve the letter $k$ for trivially valued fields only. By a $K$-analytic space we mean a non-archimedean $K$-analytic space as defined by Berkovich in \cite{berihes}.

If $X$ is a (formal) scheme, or a $K$-analytic space then by $\Coh(X)$ we denote the category of coherent sheaves on $X$. In addition, for a noetherian ring $A$ by $\Coh(A)$ we denote the category of finitely generated $A$-modules, so $\Coh(A)\toisom\Coh(\Spec(A))$.  In this article, we will need several functors between categories of coherent sheaves on various spaces (schemes, formal schemes, and analytic spaces).  All the functors we introduce are functors of $K$-linear abelian categories which preserve the tensor structure. In particular, they preserve the property of being a free sheaf (as $\calO_X$ is the neutral object for $\otimes$). Furthermore, since a finitely presented $A$-module is flat if and only if it is locally free, all these functors also preserve the property of being locally free.  The definition of the fiber product of $K$-linear abelian categories is discussed in \ref{stackgluelem}.

\section{Some constructions of analytic spaces}\label{constrsec}
In this section we briefly recall the standard constructions that associate analytic spaces to varieties and formal schemes. More details can be found in Berkovich's works where these constructions were introduced and to which we refer below, or in the survey \cite[\S5]{survey}. Although in our applications the ground analytic field will be trivially valued, we prefer to consider the general valuation when possible. The main reason for this is that in the trivially valued case many special phenomena happen. On the one hand this will be very useful for our applications, but on the other hand it may be easier to first grasp the general picture (without ``degenerations").

\subsection{$K$-analytic spaces}

\subsubsection{Spectra of Banach rings}  Let $\mathcal{A}$ be a non-archimedean Banach ring with norm $| \ |_{\mathcal{A}}$.  Recall that a non-archimedean, bounded semivaluation $| \ |$ on $\mathcal{A}$ is a map $| \ |$  from  $\mathcal{A}$ to $\bfR_+$ which satisfies $|ab|=|a||b|, |1|=1, |0|=0, |a+b| \leq max(|a|,|b|)$ for all $a,b \in \mathcal{A}$ and such that there exists a constant $C$ such that $|a|\leq C|a|_{\mathcal{A}}$ for all $a \in \mathcal{A}$.
For any non-archimedean Banach ring $\mathcal{A}$, one defines $\mathcal{M}(\mathcal{A})$ to be the set of all non-archimedean, bounded semivaluations $| \ |$ on $\mathcal{A}$ equipped with the weakest topology making the maps from $\mathcal{M}(\mathcal{A})$ to $\bfR_+$ given by $| \ | \mapsto |f|$ continuous for all $f \in \mathcal{A}$. This construction is functorial: any bounded homomorphism $\phi$ induces a continuous map $\calM(\phi)$. Any point $x=|\ |$ of $\calM(\calA)$ induces a bounded homomorphism $\chi_x\:\calA\to\calH(x)$ to the {\em completed residue field} $\calH(x)$, which is the completion of $\Frac(\calA/\Ker(|\ |))$. Furthermore, giving a point $x$ of $\calM(x)$ is equivalent to giving an isomorphism class of bounded homomorphisms $\calA\to L$, where $L$ is an analytic $K$-field generated by the image of $\calA$ as an analytic field.

\subsubsection{$K$-affinoid algebras} \label{kAffinoid}
Let $\uT=(T_1\. T_n)$ be an $n$-tuple of elements and let $\ur$ be an $n$-tuple of positive real numbers. Then the Banach algebra $K\{\underline{r}^{-1}\underline{T}\}$ is the completion of $K[\uT]$ with respect to the norm
\[|| \sum_{i\in \mathbb{N}^{n}} a_{i} \underline{T}^{i}||_{\underline{r}} = \text{max}_{i \in \mathbb{N}^{n}}|a_{i}|\underline{r}^{i}
.\]
We call $K\{\ur^{-1}\uT\}$ a {\em Tate algebra} (though sometimes this name is reserved for the classical case when $r_i=1$). A {\it $K$-affinoid algebra} is a Banach $K$-algebra admitting an {\em admissible} surjective homomorphism from some $K\{\underline{r}^{-1}\underline{T}\}$. Admissible here means that the norm on the target is equivalent to the residue norm.

There also exists a relative version of the above construction: for any Banach ring $\calA$ we define the rings $\calA\{\underline{r}^{-1}\underline{T}\}$ using the norm as above but with $|a_i|$ replaced by $|a_i|_{\calA}$. An admissible quotient of the latter ring is called {\em $\calA$-affinoid}. This construction is transitive: if $\calB$ is $\calA$-affinoid and $\calC$ is $\calB$-affinoid then $\calC$ is $\calA$-affinoid. (To see this note that $\calA\{\underline{r}^{-1}\underline{T}\}\{\underline{s}^{-1}\underline{R}\}\toisom \calA\{\underline{r}^{-1}\underline{T},\underline{s}^{-1}\underline{R}\}$ and the functor $\calD\mapsto\calD\{\underline{s}^{-1}\underline{R}\}$ preserves admissible surjections.)

\subsubsection{$K$-analytic spaces}
The spectra $\calM(\calA)$ of $K$-affinoid algebras form a geometric category which is opposite to that of $K$-affinoid algebras. Certain morphisms of $K$-affinoid spectra are called embeddings of affinoid domains, see \cite[2.2.1]{berbook} or \cite[\S3.2]{survey}; they are analogs of open immersions of affine schemes but the important difference is that topologically such a morphism is an embedding of a compact subspace which is in general not open. The latter forces one to provide a $K$-analytic space $S$ with two different topologies: the usual topology and a Grothendieck topology on the subsets of $S$, also called a G-topology, such that both open subsets and finite unions of affinoid subdomains are G-open. In particular, the very definition of general $K$-analytic spaces is rather subtle and technical, so we refer the reader to \cite[\S1]{berihes} or \cite[\S3.3,\S4.1]{survey} for details. In these sources there is also defined a structure sheaf (sometimes written as $\calO_{S,G}$) with respect to the G-topology, while the notation $\calO_S$ is used by Berkovich to denote its restriction to the usual topology. Since we will not consider sheaves in the usual topology, to simplify notation we will write $\calO_S$ instead of $\calO_{S,G}$. The main practical tool for constructing $K$-analytic spaces is \cite[1.3.3]{berihes}, which shows how to glue such spaces.

\begin{exam}\label{KanalyticExamples}
(i) If $\ur=(r_1\.r_n)$ then the $K$-affinoid space $$E(0,\ur)=\calM(K\{\ur^{-1}\uT\})$$ is the $n$-dimensional polydisc with center at 0, of radii $r_1\.r_n$, and with coordinates $\uT=(T_1\.T_n)$.

(ii) If $\ua=(a_1\.a_n)\in K^{n}$ satisfies $|a_i|\le r_i$ then the open polydisc $D(\ua,\ur)$ of the same radii and with center at $\ua$ is defined as the open subspace of $E(0,\ur)$ given by the inequalities $|T_i-a_i|<r_i$.

(iii) The affine space $\mathbb{A}_{K}^{n,\an}=\cup_\ur E(0,\ur)$ is glued from closed $n$-dimensional polydiscs of all radii. It can be identified with the set of all real semivaluations $| \ |$  on $K[\uT]$ which are bounded on $K$ (and hence coincide with the valuation on $K$), where the topology is the weakest making $| \ | \mapsto |f|$ continuous for all $f \in K[\uT]$.
\end{exam}

\subsubsection{Quasi-nets and coherent sheaves}
Let $S$ be a $K$-analytic space. The G-open sets are called {\em analytic subdomains} of $S$. Any G-covering is a set-theoretical covering, but the main subtlety of the G-topology is that it forbids certain coverings. For example, $\bbA^1_K$ is connected in the G-topology, so although the unit disc $E$ is an affinoid subdomain of $\bbA^1_K$ and its complement $Y$ is an open subdomain in $\bbA^1_K$, the covering $\bbA^1_K=E\coprod Y$ is not a G-covering.

It is traditional to refer to G-coverings as {\em admissible coverings} (meaning that the other set-theoretic coverings are not admissible for the G-topology). Here is a very useful criterion of admissibility: a covering $S=\cup_{i\in I}S_i$ of an analytic domain by analytic domains is admissible if the $S_i$'s form a {\em quasi-net} of $S$ in the sense of \cite[\S1.1]{berihes}, i.e. for any point $s\in S$ there exists a finite subset $J\subseteq I$ such that $s\in\cap_{j\in J}S_j$ and $\cup_{j\in J}S_j$ is a neighborhood of $s$.

The G-topology is used in the definition of coherent sheaves on $S$. The following result is in virtue of \cite{berihes}, though it is not stated there explicitly. It will play a central technical role in our arguments.

\begin{lem}\label{angluelem}
If $S=\cup_{i\in I}S_i$ is an admissible covering of $S$ by analytic subdomains, $M_i$ are coherent sheaves on $S_i$, and $\phi_{ij}\:M_i|_{S_{ij}}\toisom M_j|_{S_{ij}}$ are isomorphisms of coherent sheaves on intersections that satisfy the triple cocycle condition, then there exists a unique (up to isomorphism) coherent sheaf $M$ on $S$ with isomorphisms $M|_{S_i}\toisom M_j$ that are compatible with the $\phi_{ij}$. Morphisms between coherent sheaves on $S$ can be constructed from morphisms of the $M_i$ on $S_i$ which are compatible with the $\phi_{ij}$.
\end{lem}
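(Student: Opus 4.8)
The plan is to reduce the assertion to descent of coherent sheaves along \emph{finite affinoid} coverings of an affinoid space, where it becomes the standard fact that finite modules glue along admissible affinoid coverings of $\calM(\calA)$. The key structural input is that, by definition, coherent sheaves on $S$ are coherent $\calO_S$-modules for the G-topology, and that the affinoid domains of $S$ form a generating quasi-net for this topology. Consequently a descent datum relative to the admissible covering $\{S_i\}$ can be transported to one relative to a refining affinoid covering, for which the descent is Berkovich's non-archimedean analogue of Tate's acyclicity theorem \cite{berihes}; the assertion that coherent sheaves form a stack for the affinoid G-site is exactly the exactness of the associated \v{C}ech complex of finite modules.

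First I would refine the covering. Each $S_i$, being an analytic subdomain, has a quasi-net of affinoid subdomains, and since the $S_i$ themselves form a quasi-net of $S$, the family of all affinoid domains $V$ contained in some $S_i$ is again a quasi-net, hence an admissible affinoid covering $\{V_a\}_{a\in A}$ refining $\{S_i\}$. Choosing for each $a$ an index $i(a)$ with $V_a\subseteq S_{i(a)}$, I set $N_a=M_{i(a)}|_{V_a}$; restricting the $\phi_{ij}$ to the overlaps (which I may further refine so as to be affinoid) yields isomorphisms $\psi_{ab}\:N_a|_{V_a\cap V_b}\toisom N_b|_{V_a\cap V_b}$, and the triple cocycle condition for the $\psi_{ab}$ follows from that for the $\phi_{ij}$, independently of the auxiliary choices of $i(a)$.

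Next I would glue. On an affinoid $\calM(\calA)$ one has $\Coh(\calM(\calA))\toisom\{\text{finite }\calA\text{-modules}\}$, and finite affinoid descent produces from $(\{V_a\},N_a,\psi_{ab})$ a single coherent $\calO_S$-module $M$ on the G-topology; coherence, being G-local, is inherited from the $N_a$. By construction $M|_{V_a}\toisom N_a=M_{i(a)}|_{V_a}$ compatibly with the $\psi_{ab}$, and since the affinoids contained in $S_i$ form an admissible covering of $S_i$ on which coherent sheaves satisfy the sheaf condition, these local isomorphisms glue to an isomorphism $M|_{S_i}\toisom M_i$ compatible with the $\phi_{ij}$. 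Uniqueness up to isomorphism follows because any other sheaf with the same property agrees with $M$ on the $V_a$ compatibly, hence on all of $S$.

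The statement about morphisms follows the same pattern: $\Hom_{\calO_S}(-,-)$ is a sheaf for the G-topology, so a family of morphisms $M_i\to M_i'$ compatible with the gluing isomorphisms glues to a unique morphism $M\to M'$, and conversely every morphism restricts to such a family. The hard part is concentrated entirely in the affinoid descent itself, i.e. the exactness of the \v{C}ech complex for a finite affinoid covering; granting this from \cite{berihes}, the remainder is the formal manipulation of refinements and cocycles together with the G-locality of coherence and of the sheaf property of $\Hom$. One point to watch is that $K$ may be trivially valued, so that the affinoid algebras need not be the classical Tate algebras; but Berkovich's foundations are set up for arbitrary valuations, so the acyclicity input applies without change.
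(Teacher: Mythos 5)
Your argument is correct in substance but takes a genuinely different, and heavier, route than the paper. You refine $\{S_i\}$ to an admissible affinoid covering $\{V_a\}$, transport the descent datum, and invoke the \v{C}ech-acyclicity/descent machinery for finite affinoid coverings from \cite{berihes} as the engine producing $M$. The paper never leaves the original covering: it considers the full subcategory $\tau'$ of the G-site consisting of all analytic subdomains contained in some $S_i$, observes that $\tau'$ is cofinal (every admissible covering refines to a $\tau'$-covering), and then simply \emph{defines} $M(U)=M_i(U)$ for $U\subseteq S_i$, using the $\phi_{ij}$ and the cocycle condition only to check well-definedness up to unique isomorphism; the sheaf property on $\tau'$ is inherited from the fact that each $M_i$ is already a sheaf on $S_i$, and coherence is immediate from the quasi-net definition since $M|_{S_i}=M_i$. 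In particular, no exactness of a \v{C}ech complex and no effectivity theorem for affinoid coverings is actually needed: because the covering pieces are subdomains (monomorphisms in the site), the descent is of the ``Zariski'' type where the glued object can be written down directly. So your closing claim that the hard part is Tate acyclicity slightly misplaces the content --- even along your affinoid refinement, the sheaf condition on the refined subsite reduces to the sheaf condition for the given $M_{i(a)}$, which is a hypothesis, not something to prove. Your approach does land directly on an affinoid quasi-net witnessing coherence, but the paper gets that for free from the $S_i$ themselves, and your version adds the bookkeeping of the auxiliary choices $i(a)$ and the refinement of overlaps. The treatments of uniqueness and of morphisms agree.
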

\begin{proof}
Let $\tau$ denote the site of the $G$-topology of $S$ and let $\tau'$ denote its full subcategory whose objects are the analytic subdomains of $S$ contained in one of $S_i$'s. We provide $\tau'$ with the induced topology, i.e. a family of morphisms in $\tau'$ is a covering if and only if it is admissible. Any admissible covering $S=\cup_{j\in J}U_j$ admits a $\tau'$-refinement $S=\cup_{i\in I,j\in J}S_i\cap U_j$, hence $\tau'$ is cofinal in $\tau$. By the locality of sheaves, any $\tau'$-sheaf (or $\tau'$-sheaf of modules) uniquely extends to a $\tau$-sheaf.

Let us show that the datum $(M_i,\phi_{ij})$ defines a $\tau'$-sheaf. If $U$ is in $\tau$ then we choose $i\in I$ with $U\subseteq S_i$ and set $M(U)=M_i(U)$. Existence of $\phi_{ij}$'s implies that this module does not depend on the choice of $i$ up to an isomorphism, and the cocycle condition ensures that $M(U)$ is well defined up to a unique isomorphism. Restriction maps $M(U)\to M(V)$ are defined in the same way, and we obtain a sheaf of $\calO_X$-modules $M$ for $\tau'$ and hence also for $\tau$. By the definition of coherence (see \cite[p. 25]{berihes}), $M$ is coherent because so are the restrictions $M|_{S_i}=M_i$ onto the elements of the quasi-net $\{S_i\}$.

The second claim of the lemma is proved similarly and simpler as no cocycle condition is involved. So, we leave the details to the interested reader.
\end{proof}

A more precise phrasing of the above facts goes as follows.  The details of this language can be found for example in 4.1.2 'the category of descent data' in \cite{V}.
\begin{lem}\label{stackgluelem}
The pairs $(\{M_i\}, \{\phi_{ij}\})$ are objects in a
k-linear, tensor, abelian category of descent data.
The restriction functor is an equivalence in the $2$-category of k-linear abelian, tensor categories
from $\Coh(S)$ to this category of descent data.
\end{lem}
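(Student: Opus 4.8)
The plan is to read Lemma \ref{stackgluelem} as a structured refinement of Lemma \ref{angluelem}: the latter already produces the coherent sheaf underlying a descent datum and constructs morphisms out of compatible componentwise morphisms, so the task here is only to organize those statements into an equivalence of $k$-linear abelian tensor categories. First I would make the category of descent data $\calD$ explicit. Its objects are the pairs $(\{M_i\},\{\phi_{ij}\})$ satisfying the triple cocycle condition, and a morphism $(\{M_i\},\{\phi_{ij}\})\to(\{M_i'\},\{\phi_{ij}'\})$ is a family $\psi_i\:M_i\to M_i'$ of morphisms of coherent sheaves on $S_i$ commuting with the gluing isomorphisms over each intersection $S_{ij}$. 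The restriction functor $\rho\:\Coh(S)\to\calD$ sends $M$ to $(\{M|_{S_i}\},\{\phi_{ij}\})$, where $\phi_{ij}$ is the canonical identification $M|_{S_i}|_{S_{ij}}\toisom M|_{S_{ij}}\toisom M|_{S_j}|_{S_{ij}}$.

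Second, I would equip $\calD$ with its $k$-linear and symmetric monoidal structure, all defined componentwise. Direct sums and the $K$-linear structure on the Hom-groups are inherited from the categories $\Coh(S_i)$, while the tensor product is $(\{M_i\},\{\phi_{ij}\})\otimes(\{N_i\},\{\chi_{ij}\})=(\{M_i\otimes N_i\},\{\phi_{ij}\otimes\chi_{ij}\})$; the cocycle conditions for the two tensor factors combine into the cocycle condition for the product, so the result is again a descent datum, with unit object $(\{\calO_{S_i}\},\{\id\})$. Since restriction of coherent sheaves commutes with tensor products and with finite direct sums, $\rho$ is visibly $K$-linear and symmetric monoidal.

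Third comes the abelian structure, which is the one genuinely new input beyond Lemma \ref{angluelem}. Kernels and cokernels in $\calD$ I would compute componentwise, setting $\Ker(\{\psi_i\})=\{\Ker(\psi_i)\}$ and dually for cokernels. The point that must be justified is that these componentwise constructions still carry gluing data, i.e. that $\phi_{ij}$ induces an isomorphism $\Ker(\psi_i)|_{S_{ij}}\toisom\Ker(\psi_j)|_{S_{ij}}$. This follows from the exactness of the restriction functors $\Coh(S_i)\to\Coh(S_{ij})$ along inclusions of analytic subdomains: affinoid-domain embeddings are flat, so restriction is exact, and exactness for a general analytic subdomain is checked affinoid-locally. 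Hence restriction carries the kernel on $S_i$ to the kernel of the restricted morphism, and the gluing isomorphisms restrict compatibly. The same flatness shows that $\rho$ itself is exact, and that $\calD$ satisfies the abelian axioms, since normality of monomorphisms and epimorphisms holds componentwise in each $\Coh(S_i)$ and is preserved by the exact restrictions. I expect this bookkeeping to be the most delicate part, although it rests on a standard property of domain embeddings in Berkovich geometry.

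Finally, to conclude that $\rho$ is an equivalence in the $2$-category of $k$-linear abelian tensor categories, I would invoke Lemma \ref{angluelem} directly: its first assertion supplies essential surjectivity, namely that every descent datum is isomorphic to $\rho(M)$ for a suitable coherent sheaf $M$ on $S$, and its second assertion supplies full faithfulness, namely that morphisms of coherent sheaves on $S$ are in bijection with morphisms of descent data. An additive, exact, symmetric monoidal functor between abelian tensor categories that is fully faithful and essentially surjective is an equivalence in the required $2$-category, so the proof reduces to assembling the verifications above.
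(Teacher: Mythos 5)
Your proposal is correct and follows exactly the route the paper intends: the paper gives no written proof of this lemma, instead deferring to the formalism of descent data in \cite{V} and to Lemma \ref{angluelem} for essential surjectivity and fullness, which is precisely how you organize the argument. Your additional verification that kernels can be formed componentwise via flatness of affinoid domain embeddings is the right way to supply the one detail the paper leaves implicit.
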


\begin{rem}Of course, similar types of descent work in the categories of schemes or formal schemes using Zariski covers.  The resulting
 equivalences of categories between coherent sheaves on a space and the categories of descent data are compatible with the completion,
analytification, and generic fiber functors which we will introduce and use in this article.   In the category of (formal) schemes or analytic spaces,
the fiber product $\Coh(X_1) \times_{\Coh(X_1 \times_{X} X_2)} \Coh(X_2)$ is  the $k$-linear, abelian tensor category of descent data for the cover by two Zariski opens: $X_1,X_2$ in the case that $X$ is a (formal) scheme, or by two analytic subdomains $X_1, X_2$ in the case that $X$ is an analytic space.
\end{rem}

\subsection{Formal $\Kcirc$-schemes}  Following the reminder about formal schemes from \cite[\S5.2]{survey}, recall that for an ideal $I$ in a ring $A$, the {\it I-adic topology} is generated by the cosets $a+I^n$.  The {\it formal spectrum} $\gtX=\Spf(A)$ of an $I$-adic ring is the set of open prime ideals with the topology generated by the sets $D(f)$ with $f \in A$ and provided with the structure sheaf $\calO_\gtX$ such that $\calO_\gtX(D(f))=A_{\{f\}}$ for any $f\in A$. Here $A_{\{f\}}$ is the formal localization, i.e. it is the $I$-adic completion of $A_f$. Suppose that $K$ is trivially or discretely valued. If the valuation is non-trivial, we choose a non-zero element $\pi$ in the maximal ideal of $\Kcirc$ ($\pi=0$ when the valuation is trivial), so that $\Kcirc$ is a $\pi$-adic ring and $\Kcirc\{t_1\.t_n\}$ denotes the $\pi$-adic completion of $\Kcirc[t_1\.t_n]$. (Note that it is the subring of $\Kcirc[[t_1 \. t_n]]$ defined by the condition that for any $m \geq 0$ the number of coefficients not divisible by $\pi^m$ is finite.) By a formal $\Kcirc$-scheme we wean a formal scheme $\gtX$ with a morphism to $\Spf(\Kcirc)$. Such a scheme is {\em of topologically finite presentation} or {\em special} if it is locally of the form
\begin{equation}\label{special}
\gtX=\Spf(\Kcirc\{t_1\.t_n\}[[s_1\.s_l]]/(f_1\.f_m))
\end{equation}
If one can choose $l=0$ (i.e. the morphism $\gtX\to\Spf(\Kcirc)$ is adic) then one says that the formal $\Kcirc$-scheme is of {\em finite presentation}.   If $Z$ is a closed subvariety of a $K$-variety $X$ then we may consider the formal scheme $\gtX=\widehat{X}_{Z}$ given by the completion of $X$ along $Z$.  In this case there is a natural completion functor of $K$-linear abelian tensor categories
\begin{equation}\label{completion}\Coh(X) \to \Coh(\gtX)
\end{equation}
as can be found for instance in Chapter II, section 9 of \cite{Ha}.

\begin{rem}
Berkovich introduced finitely presented formal schemes in \cite[\S1]{bervanish1} without any restriction on $K$. However, if $K$ is not trivially or discretely valued then the rings $\Kcirc[[t_1\.t_n]]$ behave wildly (e.g. they possess non-closed ideals). In particular, even the basic theory of formal schemes for such rings is not developed. Special formal schemes were introduced in \cite[\S1]{bervanish2} only when $K$ is trivially or discretely valued, in particular, these formal schemes are noetherian. We will stick to this case as well, although it seems that the restriction on $K$ is technical and can be removed after developing enough foundations.
\end{rem}

Let $\mathfrak{I}$ be an ideal of definition of a special formal $\Kcirc$-scheme $\gtX$. The radical $\gtJ=\sqrt{\gtI}$ does not depend on the choice of $\gtI$ and the ringed space $(\mathfrak{X},\mathcal{O}_{\mathfrak{X}}/\gtJ)$ is a reduced scheme of finite type over $\Kcirc$ denoted $\mathfrak{X}_{s}$.

\subsection{Generic fiber and reduction map}\label{gensec}
To any special formal $\Kcirc$-scheme $\gtX$ one can associate a $K$-analytic space $\gtX_\eta$ called the {\em generic fiber} of $\gtX$. In the framework of classical rigid geometry (when the valuation is non-trivial and the generic fiber is constructed as a rigid space over $K$) this construction is due to Raynaud in the finite presentation case, and to Berthelot in general. Analogous constructions using analytic spaces were developed by Berkovich in \cite[\S1]{bervanish1} and \cite[\S1]{bervanish2}. In addition, there is a natural {\em reduction map} $\pi_{\gtX}:\gtX_\eta\to\gtX_s$ which is anti-continuous in the sense that the preimage of an open subspace is closed.

\subsubsection{Affine case}\label{genaffsec}
We discuss the affine case first, so let $\gtX=\Spf(A)$ and choose a presentation
$$\Kcirc\{t_1\.t_n\}[[s_1\.s_l]]/(f_1\. f_m)\toisom A.$$
Consider the product $E(0,1)^n\times D(0,1)^l$ of $n$ closed unit discs with coordinates $t_i$ and $l$ open unit discs with coordinates $s_j$ and define $\gtX_\eta$ as the closed subspace of $E(0,1)^n\times D(0,1)^l$ given by the vanishing of $f_1\. f_m$. It is easy to see that $\gtX_\eta$ is independent of the choice of the presentation, so it is well defined.

If $\gtX$ is of finite presentation then one can choose a presentation as in (\ref{special}) with $l=0$. In particular, $\gtX_\eta=\calM(A_\eta)$, where $A_\eta=A\otimes_{\Kcirc}K\toisom K\{t_1\.t_n\}/(f_1\.f_m)$, and so $\gtX_\eta$ is affinoid in this case.

Any point $x\in\gtX_\eta$ corresponds to a bounded character $\chi_x\:\calO(\gtX_\eta)\to\calH(x)$ and restricting it on $A$ we obtain a homomorphism $A\to\calH(x)$. The reduction homomorphism $A/\Rad(\pi A+s_1A+\dots+s_lA)\to\wHx$ gives a point $\tilx\in\gtX_s$, and the correspondence $x\mapsto\tilx$ defines the map $\pi_\gtX$.

\begin{rem}
(i) On the level of sets one can describe $\gtX_\eta$ as the set of all bounded semivaluations on $\Kcirc\{t_1\.t_n\}[[s_1\.s_l]]$ that extend the valuation on $\Kcirc$ and satisfy $|t_i|\le 1$, $|s_j|<1$, and $|f_k|=0$. Equivalently, these are bounded semivaluations on $A$ that extend the valuation on $\Kcirc$ and satisfy $|t_i|\le 1$ and $|s_j|<1$. If $\gtX_\eta$ is finitely presented then this set also coincides with the set of bounded semivaluations on the $K$-affinoid algebra $A_\eta$ that satisfy $|t_i|\le 1$ and $|s_j|<1$ (a bounded semivaluation on $A_\eta$ automatically agrees with the valuation on $K$).

(ii) In very low dimensions (analytic curves over any $K$, or analytic surfaces over a trivially valued field) one can describe $\gtX_\eta$ rather explicitly but the general case seems to be too complicated.
\end{rem}

\subsubsection{Gluing}
The affine generic fiber construction from \S\ref{genaffsec} is functorial and takes open immersions to closed subdomain embeddings. It follows that the construction globalizes to separated special formal schemes: one covers such $\gtX$ by open affine formal subschemes $\gtX_i$ and glues together the spaces $(\gtX_i)_\eta$ along the analytic subdomains $(\gtX_{ij})_\eta$. Applying the same construction again, one glues together the generic fiber of a general special formal scheme from the generic fibers of its separated formal subschemes. If $\gtX$ is finitely presented then $\gtX_\eta$ is glued from compact domains along compact domains, and hence is compact. (It is easy to show that the converse is also true, but we will not need that.)

The reduction maps glue as well, so for any special formal scheme $\gtX$ one obtains an analytic space $\gtX_\eta$ with an anti-continuous reduction map $\pi_\gtX\:\gtX_\eta\to\gtX_s$. Recall that by \cite[1.3]{bervanish2}, if $Y\into \gtX_s$ is a closed subscheme and $\gtY$ is the formal completion of $\gtX$ along $Y$ then the natural morphism
\begin{equation}\label{GenericFormalSub}\gtY_\eta\to\pi_\gtX^{-1}(Y)
\end{equation} is an isomorphism.

\subsubsection{Generic fiber of modules}\label{genfibsec}
The generic fiber functor $\gtX\mapsto\gtX_\eta$ extends to the categories of coherent sheaves as follows. Let $\gtX=\Spf(A)$ be as in \S\ref{genaffsec} and let $M$ be a finite $A$-module. For any $0<r<1$ set $A_r=k\{\ut,r^{-1}\us\}/(\uf)$ and $M_r=M\otimes_A A_r$. Since $M_r$ is a finite $A_r$ module, it defines a coherent module on $X_r=\calM(A_r)$ (we use here that by \cite[2.1.9]{berbook} $M_r$ possesses a unique structure of Banach $A_r$-module). The obtained modules on the spaces $X_r$ agree, hence we obtain a module $M_\eta$ on $\gtX_\eta=\cup_{r<1}X_r$. The construction easily globalizes, so for any special $\gtX$ we obtain a generic fiber functor \begin{equation}\label{gff}\eta\:\Coh(\gtX)\to\Coh(\gtX_\eta).\end{equation}

\subsubsection{Trivially valued ground field}
If the ground field $k=K$ is trivially valued then the generic fiber functor possesses some nice properties that do not hold otherwise.

\begin{lem}\label{GenCat} Let $k$ be a trivially valued field and let $\gtX$ be a special formal $k$-scheme. Then the generic fiber functor $\Coh(\gtX)\toisom\Coh(\gtX_\eta)$ is an equivalence of $k$-linear abelian categories.
\end{lem}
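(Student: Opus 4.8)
The plan is to reduce the statement to the affine case and then to exploit a degeneration phenomenon special to the trivially valued ground field: over $k$ the relative Tate algebras cut out by different radii all have the \emph{same} underlying ring, so the generic fiber carries no information beyond the finite module itself.

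First I would reduce to the case where $\gtX=\Spf(A)$ is affine. Both assignments $\gtX\mapsto\Coh(\gtX)$ and $\gtX\mapsto\Coh(\gtX_\eta)$ are stacks for the Zariski topology of $\gtX$: on the formal side this is ordinary descent for coherent sheaves on a (noetherian) special formal scheme, and on the analytic side it is Lemma \ref{stackgluelem} applied to the admissible covering of $\gtX_\eta$ by the subdomains $(\gtX_i)_\eta$ attached to an affine formal cover $\{\gtX_i\}$. As noted in the Remark following Lemma \ref{stackgluelem}, the functor $\eta$ is a morphism of these stacks. Hence it suffices to prove that $\eta$ is an equivalence over each affine $\gtX_i$ and over the affine pieces of the overlaps, since a morphism of stacks that is an equivalence on a basis of affines is an equivalence.

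Second comes the key computation. Write $A=k\{\ut\}[[\us]]/(\uf)$ as in (\ref{special}) and fix $0<r<1$. Because $k$ is trivially valued, a monomial $\ut^i\us^j$ has norm $r^{|j|}$ in $k\{\ut,r^{-1}\us\}$, depending only on the $\us$-degree. A short convergence analysis then shows that the completion consists exactly of the series $\sum_j P_j(\ut)\us^j$ with each $P_j\in k[\ut]$ a polynomial; that is, $k\{\ut,r^{-1}\us\}\toisom k[\ut][[\us]]$, \emph{independently of} $r$. Consequently $A_r=k\{\ut,r^{-1}\us\}/(\uf)$ is, as a ring, literally equal to $A$ for every $r\in(0,1)$, and for $r<r'$ the restriction homomorphism $A_{r'}\to A_r$ attached to the inclusion $X_r\into X_{r'}$ is the identity of $A$. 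Here I would also check that $(\uf)$ generates a closed ideal, so that $A_r$ is genuinely $k$-affinoid with $\Coh(X_r)\toisom\Coh(A_r)$; this is where one uses that special formal schemes are noetherian.

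Third, I would conclude. The spaces $X_r=\calM(A_r)$ form an increasing, hence admissible (quasi-net), covering $\gtX_\eta=\cup_{r<1}X_r$. By Lemma \ref{stackgluelem} a coherent sheaf on $\gtX_\eta$ is a descent datum $(N_r)$ with $N_{r'}\otimes_{A_{r'}}A_r\toisom N_r$; since every transition map is $\id_A$, all $N_r$ coincide with a single finite $A$-module $N$, and any such $N$ arises. Thus $\Coh(\gtX_\eta)\toisom\Coh(A)\toisom\Coh(\gtX)$, and this equivalence is precisely $\eta$, because $M_\eta|_{X_r}=M\otimes_A A_r=M$. Fullness and faithfulness are then immediate: a morphism $M_\eta\to M'_\eta$ is a compatible system of $A_r$-linear maps $M\to M'$, i.e. a single $A$-linear map. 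I expect the main obstacle to be the second step---pinning down that $A_r$ is genuinely $r$-independent as a $k$-affinoid algebra, including the closedness of $(\uf)$ and the identification of the transition maps with the identity---together with making the local-to-global descent rigorous for non-separated $\gtX$. Once the $r$-independence is in hand the equivalence is essentially formal, which is exactly the ``special phenomenon'' that fails for nontrivially valued $K$, where the algebras $A_r$ genuinely vary with $r$.
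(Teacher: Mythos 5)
Your proposal is correct and follows essentially the same route as the paper: reduce to the affine case by gluing, observe that for trivially valued $k$ the algebras $A_r=k\{\ut,r^{-1}\us\}/(\uf)$ all have underlying $k$-algebra equal to $A$ with identity transition maps (your explicit convergence computation is exactly the content of the paper's assertion $\phi_r\:A\toisom A_r$), and conclude by descent along the admissible covering $\gtX_\eta=\cup_r X_r$. Your worry about closedness of $(\uf)$ is harmless but unnecessary, since all ideals of a $k$-affinoid algebra are closed.
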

\begin{proof}
In view of Lemma \ref{angluelem}, it suffices to consider the affine case, so assume that $\gtX=\Spf(A)$ where $A=k\{\ut\}[[\us]]/(\uf)$. Note that $k\{\ut\}=k[\ut]$ with the trivial norm. Since $A$ is a noetherian adic ring, $\Coh(\gtX)$ is equivalent to the category $\Coh(A)$ of finite $A$-modules. The generic fiber $\gtX_\eta$ is the filtered union of the affinoid domains $V_r=\calM(A_r)$, where $0<r<1$ and $A_r$ is as in \S\ref{genfibsec}. For any such $r$, we have that $\phi_r\:A\toisom A_r$ as a $k$-algebra (though not as a Banach algebra), in particular, $\Coh(A)\toisom\Coh(A_r)$. By \cite[2.1.9]{berbook}, the latter is equivalent to the category of
finite Banach $A_r$-modules, which is equivalent to $\Coh(V_r)$ by definition. The latter two equivalences are canonical (i.e. defined up to a unique isomorphism of functors), so $\phi_r$ determines an equivalence $\psi_r\:\Coh(A)\toisom\Coh(V_r)$ up to a unique isomorphism of functors.

The bijective homomorphisms $\phi_{rs}\:A_r\to A_s$ corresponding to the inclusions $V_s\into V_r$ for $0<s\le r<1$ induce equivalences $\psi_{rs}\:\Coh(V_r)\toisom\Coh(V_s)$. Since $\phi_s=\phi_{rs}\circ\phi_r$ we obtain that $\psi_s$ is canonically isomorphic to $\psi_{rs}\circ\psi_r$. Therefore the equivalences $\psi_{rs}$ are compatible: $\psi_{rs}\toisom\psi_{ts}\circ\psi_{rt}$. It follows that $\Coh(\gtX_\eta)\toisom\Coh(V_r)$ and $\Coh(\gtX)\toisom\Coh(\gtX_\eta)$, as claimed.
\end{proof}

\begin{rem}
(i) Each base change $\psi_{rs}$ is given by the functor $M_r\mapsto M_s=M_r\wtimes_{A_r}A_s$. Although $\phi_{rs}$ is bijective, it is not an isomorphism of Banach rings when $s<r$ (e.g. its inverse is unbounded). In particular, $M_r\toisom M_s$ as an $A$-module, but the norm of $M_r$ may differ from that of $M_s$.

(ii) In the affine case, the generic fiber functor preserves $M$ as a module but replaces its adic topology with a Frechet module structure.
\end{rem}

\subsection{Tubular descent}\label{descsec}
Once the generic fiber functor is introduced, we can define $W$ and formulate the main result on tubular descent. This will be done in this section.

\subsubsection{Notation}\label{notsec}
We fix the following notation until the end of the paper: $k$ is a ground field, which we provide with the trivial valuation, $X$ is a $k$-variety as defined in \ref{convsec}, $Z\into X$ is a closed subvariety, $\gtX=\hatX_Z$ is the formal completion of $X$ along $Z$, and $U\subset X$ is the complement of $Z$. Our aim is to glue the category $\Coh(X)$ from $\Coh(U)$ and $\Coh(\gtX)$. The gluing will be along the category $\Coh(W)$, where $W$ is defined below.

\subsubsection{The definition of $W$}\label{Wsec}
The generic fiber construction provides a compact analytic space $X_\eta$ with compact subdomain $U_\eta$ and closed subspace $Z_\eta$, and we set $V=X_\eta-Z_\eta$ and $W=X_\eta-U_\eta-Z_\eta$. Note that by applying (\ref{GenericFormalSub}) to the case $Y=Z=\gtX_s$ we have $X_\eta-U_\eta=\pi_X^{-1}(Z)=\gtX_\eta$, and hence $Z_\eta\subset\gtX_\eta$ is disjoint from $U_\eta$.

\begin{rem}\label{Wrem}
(i) Unlike the algebraic case, both $U_\eta$ and $Z_\eta$ are closed, so the $k$-analytic space $W=X_\eta-U_\eta-Z_\eta$ is usually non-empty. Actually, $W=\emptyset$ if and only if $U_\eta$ and $Z_\eta$ are each individually both open and closed, but then also $U$ and $Z$ are each individually both open and closed, and so $X$ is not connected. This explains why we have enough ``meat" to perform the desired descent in the analytic category.

(ii) Note that $\gtX_\eta=X_\eta-U_\eta$ is a tubular neighborhood of $Z_\eta$ in $X$ and $W=\gtX_\eta-Z_\eta$ is a punctured tubular neighborhood of $Z_\eta$. So, one can view $W$ as a $k$-analytic version of a punctured tubular neighborhood of $Z$. Note also that $W$ is determined by the formal scheme $\gtX$, namely it coincides with $\gtX_\infty=\gtX_\eta-(\gtX_s)_\eta$ (we cannot reconstruct the non-reduced structure of $Z$ from the formal completion $\gtX$, but $\gtX_s$ is the reduction of $Z$, and this suffices to define $W$). Thus, although one cannot reasonably define the ``punctured formal scheme" $\gtX-\gtX_s$ in the category of formal schemes, $\gtX_\infty$ provides a realization of such an object as an analytic space.

(iii) We already remarked in \S\ref{relatedsec} that if $\gtX=\Spf(A)$ is affine then we can replace it with $\calX=\Spec(A)$ in the formulation of the descent using $\calW=\calX\times_XU$ for gluing. One cannot globalize this construction in a naive way: if one covers $X$ with affine open subschemes $X_i$ and constructs schemes $\calW_i$ as above, then one cannot glue them as schemes (or formal schemes). We will show in Lemmas \ref{Wlem} and \ref{Wgenlem} that $W$ is a $k$-analytic version of such a gluing.
\end{rem}

Also, let us describe $W$ in the affine case. In this example and in the sequel we follow the convention that if a finitely generated $k$-algebra is considered as a Banach $k$-algebra then it is provided with the trivial norm.

\begin{exam}\label{spacesexam}
Assume that $X=\Spec(A)$ is affine, and let $I=(f_1\.f_n)$ be the ideal defining $Z$. Then $X_\eta=\calM(A)$ (where $|\ |_A$ is trivial), so $X_\eta$ consists of all bounded semivaluations of $A$, and we have the following description of the relevant subspaces: $\gtX_\eta$ is defined by the conditions $|f_i|<1$ (for $1\le i\le n$), $Z_\eta$ is defined by the conditions $|f_i|=0$, $U_\eta$ is defined by the conditions $|f_i|=1$, and $W$ is defined by the conditions $|f_i|<1$ for each $1\le i\le n$ and $|f_i|>0$ for some $i$.
\end{exam}

\subsubsection{The key lemma}
In order to formulate our main descent result we will need the following lemma. Its proof is rather computational, and we postpone it until \S\ref{equivsec} (with no circular reasoning happening).

\begin{lem}\label{equivlem}
Assume that $k$ is a trivially valued field and $X$ is a $k$-variety with open subvariety $U$ and $Z=X-U$, and set $V=X_\eta-Z_\eta$. Then the restriction functor $\Coh(V)\to\Coh(U_\eta)$ is an equivalence.
\end{lem}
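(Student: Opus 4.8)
\emph{The plan} is to establish the equivalence locally on $X$ and then glue. First I would choose a finite affine open cover $X=\bigcup_\alpha X_\alpha$; restricting $V=X_\eta-Z_\eta$ and $U_\eta$ along it gives admissible coverings $V=\bigcup_\alpha((X_\alpha)_\eta-Z_\eta)$ and $U_\eta=\bigcup_\alpha(U\cap X_\alpha)_\eta$ that are compatible under restriction. Since the restriction functor is local and coherent sheaves on an analytic space are the same as descent data for a quasi-net (Lemmas \ref{angluelem} and \ref{stackgluelem}), it suffices to prove the equivalence over each $X_\alpha$ and to check compatibility on the overlaps $X_\alpha\cap X_\beta$. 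This reduces us to the affine case $X=\Spec(A)$ with $Z=V(f_1\. f_n)$ and the concrete picture of Example \ref{spacesexam}.

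In the affine case I would use the radius function $\rho=\max_i|f_i|$, so that $V=\{0<\rho\le 1\}$ and $U_\eta=\{\rho=1\}$. Cover $V$ by the domains $V_i=\{|f_i|=\rho>0\}$, on which $f_i$ is invertible and dominates the remaining $f_j$; these form a quasi-net, and $U_\eta\cap V_i=\{|f_i|=1\ge|f_j|\}$, call it $U_i=(D(f_i))_\eta$, covers $U_\eta$. Writing $B=A[1/f_i]=\calO(D(f_i))$ and $g_j=f_j/f_i\in B$, the piece $V_i=\{|f_i|\le 1,\ |g_j|\le 1,\ 0<|f_i|\}$ is a half-open annulus fibered over the affinoid $U_i$ by $\rho=|f_i|\in(0,1]$, with boundary fibre $U_i$ at $\rho=1$. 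I would exhaust it by the closed annuli $V_i^a=\{a\le|f_i|\le 1,\ |g_j|\le 1\}=\calM(C_a)$, $a\in(0,1)$, so that $V_i=\bigcup_{0<a<1}V_i^a$.

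The computational heart is then modeled on the proof of Lemma \ref{GenCat}: because $k$ is trivially valued, each $C_a$ coincides, as a $k$-algebra, with $B$ (shrinking $a$ only rescales the Banach norm, exactly like the bijections $\phi_r$ there), and likewise $\calO(U_i)$. Hence all restriction maps $\Coh(C_a)\to\Coh(C_{a'})$ for $a<a'$ and $\Coh(C_a)\to\Coh(U_i)$ are equivalences, canonically up to unique isomorphism, and taking the filtered union as in Lemma \ref{GenCat} yields $\Coh(V_i)\toisom\Coh(V_i^a)\toisom\Coh(U_i)$. In other words, on each chart restriction to the boundary fibre is an equivalence, both sides being canonically $\Coh(B)$.

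Finally I would glue. Running the same collapse on the overlaps $V_i\cap V_j=\{|f_i|=|f_j|=\rho>0\}$ and $U_i\cap U_j$ shows the chart equivalences agree there, so Lemma \ref{stackgluelem} assembles them into $\Coh(V)\toisom\Coh(U_\eta)$ in the affine case; these then glue over the $X_\alpha$ to the general statement. The step I expect to be the main obstacle is this passage from the boundary $\{\rho=1\}$ to the full, non-compact, non-affinoid $V$: restriction must lose no information, i.e. the radial exhaustion must be invisible and the local identifications with $\Coh(B)$ must be compatible on every overlap even though the dominant index $i$ jumps from chart to chart. What makes it tractable --- and the computation ``straightforward'' --- is the triviality of the valuation, which collapses every annulus algebra $C_a$ to an ordinary finitely generated $k$-algebra, precisely the phenomenon exploited in Lemma \ref{GenCat}.
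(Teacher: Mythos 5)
Your argument is correct and rests on exactly the paper's key mechanism: over a trivially valued field the exhausting annulus algebras are ordinary finitely generated $k$-algebras (your $C_a\cong A_{f_i}=B$ as rings, with only the Banach norm depending on $a$), so every radial restriction functor is an equivalence, and everything is assembled by descent for quasi-nets (Lemmas \ref{angluelem} and \ref{stackgluelem}). Where you genuinely diverge is the decomposition in the affine case $Z=V(f_1\. f_n)$: the paper first proves the principal case $U=\Spec(A_f)$ by exhausting $V$ by the affinoids $V_r=\calM(A\{rf^{-1}\})$ with $V_1=U_\eta$, and then covers the general affine $V$ by the \emph{open} subsets $X_\eta-(Z_i)_\eta=\{|f_i|>0\}$, whose double and triple overlaps are again of principal type (with $f=f_if_j$, etc.), so the multi-equation case is a formal gluing of the one-equation case. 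You instead cover $V$ by the closed domains $\{|f_i|=\max_j|f_j|>0\}$ --- effectively the blow-up charts, matching the description of $W$ in Lemma \ref{Wgenlem} --- which makes the fibration over $(0,1]$ transparent but obliges you to carry the bounded functions $g_j=f_j/f_i$ through the annulus computation and to verify the quasi-net condition for a covering by non-open domains; both routes work. Two points to tighten: your opening reduction needs the equivalence already on the overlaps $X_\alpha\cap X_\beta$, which are affine only when $X$ is separated, so for a general $k$-variety you need the paper's two-stage reduction (separated case first, then cover an arbitrary $X$ by separated opens); and Lemma \ref{stackgluelem} also requires compatible equivalences on triple overlaps, which your collapse argument does furnish but should be recorded explicitly.
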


\subsubsection{The main result}
In the following theorem, we will use the generic fiber functor $\eta$ defined in (\ref{gff}) and the completion functor (\ref{completion}). We also use three restriction functors ($k$-linear functors of tensor abelian categories): (i) the functor $\Res:\Coh(\gtX_\eta) \to \Coh(W)$ given by combining (\ref{GenericFormalSub}) in the case that $Y=Z=\gtX_s$ with the restriction of coherent sheaves from $X_\eta-U_\eta$ to $W$, (ii) $\Res:\Coh(V)\to \Coh(W)$ given by restriction, and (iii) $\Res:\Coh(V)\to \Coh(U_\eta)$ which is also given by restriction. By Lemma \ref{equivlem} the latter is an equivalence, hence its inverse in the following theorem is well defined up to a unique isomorphism of functors.

\begin{theor}\label{MainThm}
Assume that $k$ is a trivially valued field, $X$ is a $k$-variety, $Z\into X$ is a closed subvariety, $U=X-Z$ is the complement, and $\gtX=\hatX_Z$ is the formal completion along $Z$. Consider the $k$-analytic varieties $X_\eta$, $U_\eta$, $Z_\eta$, and set $V=X_\eta-Z_\eta$ and $W=X_\eta-U_\eta-Z_\eta$. Consider the functors
$$\Coh(\gtX)\stackrel{\eta}{\to}\Coh(\gtX_\eta)\stackrel{\Res}{\to}\Coh(W)$$ and \begin{equation}\label{chain} \Coh(U)\stackrel{\eta}{\to}\Coh(U_\eta)\stackrel{\Res^{-1}}{\toisom}\Coh(V)\stackrel{\Res}{\to}\Coh(W)\end{equation} where $\Res$ denote the restriction functors. Then the restriction $\Coh(X)\to\Coh(U)$ and the completion functor $\Coh(X)\to\Coh(\gtX)$ induce an equivalence of $k$-linear tensor abelian categories
\begin{equation}\label{FibProd}\Coh(X)\toisom\Coh(U)\times_{\Coh(W)}\Coh(\gtX).\end{equation}
\end{theor}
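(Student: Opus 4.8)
The plan is to reduce the statement to a single gluing of coherent sheaves on the analytic space $X_\eta$ and then transport the resulting equivalence back to schemes and formal schemes through Lemmas \ref{GenCat} and \ref{equivlem}. The geometric heart of the matter is that the two subspaces $V=X_\eta-Z_\eta$ and $\gtX_\eta=X_\eta-U_\eta$ form a two-element admissible covering of $X_\eta$ whose intersection is precisely $W$.

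First I would record the elementary set-theoretic identities. Since $U_\eta\cap Z_\eta=\emptyset$ (Remark \ref{Wrem}), we have $V\cup\gtX_\eta=X_\eta-(U_\eta\cap Z_\eta)=X_\eta$ and $V\cap\gtX_\eta=X_\eta-U_\eta-Z_\eta=W$. Both $U_\eta$ and $Z_\eta$ are closed in $X_\eta$ (Remark \ref{Wrem}), so $V$ and $\gtX_\eta$ are \emph{open} analytic subdomains; each is therefore a neighborhood of every one of its points, and the quasi-net criterion for admissibility is satisfied trivially. Applying the gluing Lemma \ref{angluelem}, in the precise categorical form of Lemma \ref{stackgluelem} and the remark following it, to the admissible cover $X_\eta=V\cup\gtX_\eta$ yields an equivalence $\Coh(X_\eta)\toisom\Coh(V)\times_{\Coh(W)}\Coh(\gtX_\eta)$ in which the two functors into $\Coh(W)$ are the restrictions.

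Next I would replace each corner by its algebraic counterpart. Viewing $X$ and $U$ as special formal $k$-schemes with the zero ideal of definition, Lemma \ref{GenCat} supplies equivalences $\Coh(X)\toisom\Coh(X_\eta)$ and $\Coh(U)\toisom\Coh(U_\eta)$; composing the second with the inverse of the restriction equivalence of Lemma \ref{equivlem} gives $\Coh(U)\toisom\Coh(V)$, which is the key equivalence (\ref{eq1}). Lemma \ref{GenCat} likewise gives $\Coh(\gtX)\toisom\Coh(\gtX_\eta)$. Concatenating, $$\Coh(X)\toisom\Coh(X_\eta)\toisom\Coh(V)\times_{\Coh(W)}\Coh(\gtX_\eta)\toisom\Coh(U)\times_{\Coh(W)}\Coh(\gtX),$$ which is the asserted equivalence (\ref{FibProd}). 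By construction the two legs of each fiber product into $\Coh(W)$ are exactly the composites appearing in the chain (\ref{chain}) and in the functor $\Coh(\gtX)\to\Coh(\gtX_\eta)\to\Coh(W)$ of the theorem, so the fiber products coincide.

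The one substantive thing left to verify is that this composite is the functor $\calF\mapsto(\calF|_U,\hatcalF_Z)$ of the statement, i.e. that the corner-equivalences carry the generic fiber $\calF_\eta$ to the restriction $\calF|_U$ and the completion $\hatcalF_Z$ respectively. On the $U$-side one checks $\calF_\eta|_V\toisom\Res^{-1}\bigl((\calF|_U)_\eta\bigr)$: both sides restrict to $(\calF|_U)_\eta$ on $U_\eta$ because the generic fiber functor commutes with restriction to the open $U$, and since $\Res\colon\Coh(V)\to\Coh(U_\eta)$ is an equivalence the extension to $V$ is unique. On the $\gtX$-side one checks $\calF_\eta|_{\gtX_\eta}\toisom(\hatcalF_Z)_\eta$, which is the sheaf-level form of the identification $\gtX_\eta\toisom\pi_X^{-1}(Z)$ in (\ref{GenericFormalSub}): completing $\calF$ along $Z$ and passing to the generic fiber is the restriction of $\calF_\eta$ to the preimage $\pi_X^{-1}(Z)=\gtX_\eta$. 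These are exactly the compatibilities flagged in the remark after Lemma \ref{stackgluelem}. I expect essentially all of the genuine difficulty to sit inside Lemma \ref{equivlem}, whose proof is deferred to \S\ref{equivsec}; granting it, together with Lemma \ref{GenCat}, the theorem is formal, and the only real care required is the bookkeeping of these two compatibilities and of the canonical comparison isomorphism $\phi$ on $W$.
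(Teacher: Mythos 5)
Your proposal is correct and follows essentially the same route as the paper's own proof: cover $X_\eta$ by the open subdomains $V$ and $\gtX_\eta$ meeting in $W$, glue via Lemma \ref{angluelem}/\ref{stackgluelem}, and transport the corners back through Lemmas \ref{GenCat} and \ref{equivlem}. The extra compatibility checks you spell out at the end are the same ones the paper leaves implicit, so there is nothing to change.
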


Note that the functors $\eta$ above are also equivalences, but we do not need this to formulate the theorem.

\begin{proof}
By \S\ref{Wsec}, $X_\eta$ is covered by $V=X_\eta-Z_\eta$ and $\gtX_\eta=X_\eta-U_\eta$. Being generic fibers of finitely presented formal $k$-schemes, the spaces $Z_\eta$ and $U_\eta$ are compact, so their complements are open. Thus, this covering is admissible and by Lemma \ref{angluelem} there is an equivalence $$\Coh(X_\eta)\toisom\Coh(V)\times_{\Coh(W)}\Coh(\gtX_\eta).$$
Since $\Coh(X)\toisom\Coh(X_\eta)$, $\Coh(\gtX)\toisom\Coh(\gtX_\eta)$, and $\Coh(U)\toisom\Coh(U_\eta)$ by Lemma \ref{GenCat}, and $\Coh(V)\toisom\Coh(U_\eta)$ by Lemma \ref{equivlem}, we obtain the assertion of the theorem.
\end{proof}

\begin{rem}
As we remarked in \S\ref{convsec}, the equivalence in Theorem \ref{MainThm} preserves locally free sheaves and thereby we get descent for groupoids of vector bundles.  There is a similar statement for the endomorphisms of any vector bundle on $X$ and by considering the trivial bundle we have an isomorphism of rings $\mathcal{O}(X) \cong \mathcal{O}(U) \times_{\mathcal{O}(W)} \mathcal{O}(\gtX)$.  By properly interpreting infinite dimensional algebraic varieties, the stack of rank $r$ vector bundles on $X$ (over the site of schemes over $k$ of finite type) trivializing on $U$ and on $\gtX$ can be identified with the quotient stack $[GL_{r}(\mathcal{O}(U)) \backslash GL_{r}(\mathcal{O}(W))/GL_{r}(\mathcal{O}(\gtX))]$, as in \cite{F, BL1}.   We give a specific example in \ref{ProjSpace}.
\end{rem}

\section{Tubular descent: the proper case}\label{tubpropsec}
We will show in this section that if $X$ is proper then the key lemma and tubular descent easily follow from a standard tool of Berkovich geometry -- the non-archimedean GAGA theory. We will first recall analytification of varieties and GAGA, and its relation to the generic fiber construction. Then, the descent will be constructed in a couple of lines. The material of this section may be instructive. On the other hand, it will not be used when dealing with the general case, so the reader can skip directly to \S\ref{gencase}.

\subsection{Analytification}

\subsubsection{Affine case}
Assume that $K$ is an analytic field. Then to any $K$-variety $Y$ one can associate a $K$-analytic space $Y^\an$
called the {\em analytification} of $Y$. If $Y=\Spec(A)$ for $A=K[T_1\. T_n]/(f_1\. f_m)$ then $Y^\an$ is the analytic subspace of the affine space $\mathbb{A}^{n,\an}_K$ with coordinates $T_1\. T_n$ defined by the vanishing of $f_1\. f_m$. This construction is independent of the presentation of $A$, functorial, and compatible with open immersions.

\begin{exam}
For instance, if $A=K[\underline{T}]/I$ is a finitely generated $K$-algebra and $g \in A$ then $(\Spec(A_{g}))^\an \subset \Spec(A)^\an$ consists of the subset of non-archimedean, bounded semivaluations on $K\{\underline{T}\}$ which vanish on $I$ but not on $g$.
\end{exam}

\subsubsection{Gluing}
Since analytification respects open immersions, it extends to a functor $Y\mapsto Y^\an$ from the category of $K$-varieties to the category of $K$-analytic spaces. Explicitly, if $Y$ is separated and $Y=\cup_i Y_i$ is an open affine covering then $Y^\an$ is glued from the spaces $Y_i^\an$ along open subspaces $Y_{ij}^\an$. This extends the analytification functor to the category of all separated varieties, and applying the same procedure again we extend it further to the category of all $K$-varieties. Note that $Y^\an$ is a {\em good} $K$-analytic space, i.e. any of its points is contained in an affinoid neighborhood (good spaces are the spaces introduced in \cite{berbook}).

\subsubsection{Analytification of modules}
The analytification $Y^\an$ of $Y$ represents morphisms of locally ringed spaces $(S,\calO_S)\to(Y,\calO_Y)$ with source a good $K$-analytic space, see \cite[\S3.4]{berbook}. In particular, there is the universal analytification morphism $\alp_Y\:(Y^\an,\calO_{Y^\an})\to(Y,\calO_Y)$ (denoted by $\pi_Y$ in loc.cit.) that induces an analytification functor $\Coh(Y)\to\Coh(Y^\an)$ via the rule $\calF\mapsto\calF^\an=\alp_Y^*(\calF)=\alp_Y^{-1}(\calF)\otimes_{\alp_Y^{-1}(\calO_Y)}\calO_{Y^\an}$.

\subsubsection{GAGA}
All classical complex analytic GAGA results have non-archimedean counterparts, as was shown by Berkovich in \cite[\S3.4]{berbook}. In particular, $Y$ is proper (resp. separated) if and only if $Y^\an$ is compact (resp. Hausdorff) and for a proper $Y$ the analytification functor $\Coh(Y)\to\Coh(Y^\an)$ is an equivalence.

\subsubsection{Trivial valuation}\label{GAGAtriv}
If $k$ is trivially valued then many results of GAGA extend to non-proper varieties, \cite[\S3.5]{berbook}. In particular, in this case the functor $Y\mapsto Y^\an$ is fully faithful and for any $k$-variety $Y$ the functor $\Coh(Y)\to\Coh(Y^\an)$ is an equivalence. It follows that for any $k$-variety $Y$ there is an isomorphism of rings $\calO_Y(Y)\toisom\calO_{Y^\an}(Y^\an)$ (and actually, one has that $H^i(\calF)\toisom H^i(\calF^\an)$ for any coherent $\calO_Y$-module $\calF$). If $K$ is not trivially valued then this fails already for $\bfA^1_K$, as there exist non-polynomial analytic functions on $\bfA^{1,\an}_K$.

\subsection{A relation between analytification and generic fiber functors}\label{relationsec}
The aim of this section is to discuss a relationship between the analytification and the generic fiber functors observed by Berkovich in \cite[\S5]{bervanish1}.

\subsubsection{Two functors from $\Kcirc$-schemes to $K$-analytic spaces} Let $Y$ be a flat $\Kcirc$-scheme of finite presentation. Then there are two functorial constructions that associate to $Y$ a $K$-analytic space. First, we can consider the generic fiber $Y\otimes_{\Kcirc}K$ of the morphism $Y\to\Spec(\Kcirc)$, which is a $K$-variety, and then we can analytify $Y\otimes_{\Kcirc}K$. This construction produces a functor $\calF(Y)=(Y\otimes_{\Kcirc}K)^\an$. The second construction first completes $Y$ to a formal $\Kcirc$-scheme $\hatY$ and then takes the generic fiber. The output is a functor $\calC(Y)=\hatY_\eta$.

\subsubsection{The comparison transformation}\label{comparsec}
Assume that $Y$ is affine, say $Y=\Spec(A)$ with $A=\Kcirc[t_1\.t_n]/(f_1\.f_m)$. Then $\calC(Y)=\hatY_\eta$ is the affinoid subdomain of $\calF(Y)=(Y\otimes_{\Kcirc}K)^\an$ defined by the inequalities $|t_i|\le 1$. Indeed, for $T=\bfA^n_{\Kcirc}=\Spec(\Kcirc[t_1\.t_n])$ this is clear as $\calF(T)=\bfA^{n,\an}_K$ and $\calC(T)=E(0,1)^n$ is the unit polydisc. And in general, $\calF(Y)$ and $\calC(Y)$ are the closed subspaces of $\calF(T)$ and $\calC(T)$, respectively, given by the vanishing of $f_j$'s.

The embedding morphisms $i_Y\:\calC(Y)\to\calF(Y)$ are compatible with localizations, hence we obtain a natural transformation of functors $i\:\calC\to\calF$. If $Y$ is separated then $i_Y$ is an embedding of an analytic subdomain by \cite[\S5]{bervanish1}. For a proper $Y$ both $Y_\eta$ and $Y^\an$ are proper, hence $Y_\eta$ is a connected component of $Y^\an$. Using that the connected components of $Y$ are in one-to-one correspondence with connected components of both $Y_\eta$ and $Y^\an$, we obtain that $i_Y\: Y_\eta\toisom Y^\an$.

\subsubsection{Trivial valuation}
If $k=K$ is trivially valued then $k=\kcirc$ and various intermediate objects in the above construction coincide: $Y\otimes_{\kcirc}k=Y=\hatY$ can be viewed both as a $k$-variety and a special formal $\kcirc$-scheme. In particular, analytification and generic fiber provide two functors from $k$-varieties to $k$-analytic spaces and $i$ is a natural transformation between them.

\begin{lem}\label{comparlem}
Assume that $k$ is trivially valued and $Y$ is a $k$-variety. Then $i_Y\: Y_\eta\to Y^\an$ is an isomorphism (resp. an embedding of a subdomain) if and only if $Y$ is proper (resp. separated).
\end{lem}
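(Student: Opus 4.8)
The plan is to derive the two equivalences by pairing the implications already recorded in \S\ref{comparsec} with their converses, the latter being where the trivial valuation and the valuative criteria do the work. Recall that \S\ref{comparsec} establishes, for an arbitrary $k$-variety $Y$, that $Y$ separated implies that $i_Y$ is an embedding of an analytic subdomain, and that $Y$ proper implies that $i_Y$ is an isomorphism. It therefore suffices to prove the converses: that $i_Y$ an isomorphism forces $Y$ proper, and that $i_Y$ an embedding of a subdomain forces $Y$ separated.

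For the properness converse I would first note that $Y_\eta$ is compact. Indeed, a $k$-variety is of finite type, hence a finitely presented special formal $k$-scheme with no power-series variables, so $Y_\eta$ is glued from finitely many affinoid domains and is compact. Consequently, if $i_Y\:Y_\eta\toisom Y^\an$ is an isomorphism, then $Y^\an$ is homeomorphic to the compact space $Y_\eta$ and is itself compact; by non-archimedean GAGA (\cite[\S3.4]{berbook}) compactness of $Y^\an$ is equivalent to properness of $Y$, so $Y$ is proper.

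For the separatedness converse I would read $i_Y$ through the reduction map $\pi_Y\:Y_\eta\to Y_s$ and apply the valuative criterion. The dictionary I would set up, valid because $k$ is trivially valued, is that a point of $Y^\an$ is a real semivaluation on $\calO_Y$ that is trivial on $k$ (see Example \ref{KanalyticExamples}(iii)), that $Y_\eta\subseteq Y^\an$ is precisely the locus of those semivaluations admitting a center on $Y$ (in the affine case $Y=\Spec(A)$ this is the condition $|a|\le1$ for all $a\in A$, i.e. $Y_\eta=\calM(A)$ as in Example \ref{spacesexam}), and that $\pi_Y$ carries such a point to its center, while $i_Y$ forgets the center. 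Granting this, suppose $Y$ is not separated. Since $Y$ is of finite type over the field $k$, the valuative criterion of separatedness may be tested on discrete (hence rank one, real) valuation rings \cite{Ha}, and non-separatedness yields a $k$-trivial valuation on a finitely generated extension field together with two distinct centers $y_1\neq y_2$ on $Y$. These centers produce two points $x_1,x_2\in Y_\eta$ with $\pi_Y(x_1)=y_1\neq y_2=\pi_Y(x_2)$, so $x_1\neq x_2$; yet $x_1$ and $x_2$ carry the same underlying semivaluation, whence $i_Y(x_1)=i_Y(x_2)$. Thus $i_Y$ is not injective, so not an embedding. Contrapositively, $i_Y$ an embedding forces $Y$ separated, closing the equivalence together with \S\ref{comparsec}.

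I expect the properness half to be immediate once compactness of $Y_\eta$ and GAGA are invoked, and the separatedness half to be the main obstacle: the real work is the dictionary identifying points of $Y_\eta$ with centered valuations and the Berkovich reduction map $\pi_Y$ with the algebro-geometric center, so that injectivity of $i_Y$ translates exactly into uniqueness of centers. A minor point to verify is that rank one valuations suffice in both criteria, which holds because $Y$ is of finite type over a field and the criteria may therefore be checked on discrete valuation rings.
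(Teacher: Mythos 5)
Your proof is correct, and the properness half coincides with the paper's: both use that $Y_\eta$ is compact (being the generic fiber of a finitely presented formal $k$-scheme) together with the GAGA equivalence ``$Y^\an$ compact $\Leftrightarrow$ $Y$ proper''. For the separatedness half, however, you take a genuinely different route. The paper again argues through GAGA: if $Y$ is not separated then $Y^\an$ is not Hausdorff, and since $Y_\eta$ is compact (hence Hausdorff) the map $i_Y$ cannot be an embedding of a subdomain. You instead invoke the valuative criterion of separatedness to produce a discrete (hence rank-one, $k$-trivial) valuation with two distinct centers $y_1\neq y_2$ on $Y$, and exhibit the failure of $i_Y$ at the level of points: the two centered points $x_1,x_2\in Y_\eta$ are distinct because $\pi_Y$ separates them, yet they have the same image in $Y^\an$, so $i_Y$ is not even injective. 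Your approach is more hands-on and arguably more illuminating --- it identifies exactly which points of $Y_\eta$ get collapsed and avoids the Hausdorff part of GAGA --- at the cost of having to set up the dictionary between points of $Y_\eta$, $Y^\an$, and centered valuations; the paper's argument is shorter and treats the two cases uniformly. One step you flag but do not fully verify deserves a line: that $i_Y(x_1)=i_Y(x_2)$. This holds because the support $\xi$ of the common semivaluation (the image of the generic point of $\Spec R$) is a generization of both $y_1$ and $y_2$, hence lies in the intersection of the two affine charts containing them, so the corresponding points of $Y_1^\an$ and $Y_2^\an$ both lie in $Y_{12}^\an$ and are glued to a single point of $Y^\an$; by contrast $x_1$ and $x_2$ are not glued in $Y_\eta$ precisely because $y_1\notin Y_{12}$ (a valuation has a unique center on the separated chart $Y_2$). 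With that observation supplied, your argument is complete.
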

\begin{proof}
The converse implications were established above. Conversely, if $Y$ is not proper (resp. separated) then $Y^\an$ is not compact (resp. Hausdorff) by GAGA. Since $Y_\eta$ is compact, $i_Y$ is not an isomorphism (resp. embedding of a subdomain).
\end{proof}

\subsection{Application to tubular descent}\label{propdescsec}
Let $k,X,U,Z,\gtX$ be as defined in \S\ref{notsec}. Let also $W=X_\eta-U_\eta-Z_\eta$ and $V=X_\eta-Z_\eta$, as earlier. Throughout \S\ref{propdescsec} we also assume that $X$ is proper.

\subsubsection{The main result}
Recall that $\Coh(U_\eta)\toisom\Coh(U)$ by Lemma \ref{GenCat}. Since $X$ and $Z$ are proper we have that $X_\eta\toisom X^\an$ and $Z_\eta\toisom Z^\an$ by Lemma \ref{comparlem}. Hence $V$ coincides with $U^\an=X^\an-Z^\an$, and using GAGA we obtain that $\Coh(U)\toisom\Coh(U^\an)=\Coh(V)$. This, proves the key Lemma \ref{equivlem} for a proper $X$ and completes the proof of Theorem \ref{MainThm} in this case.

\subsubsection{Infinite fiber of $U$}\label{propersec}
As we have just seen, for a proper $X$ the space $W$ coincides with the space $U_\infty=U^\an-U_\eta$. In particular, $W$ depends only on $U$. We suggest to call $U_\infty$ the infinite fiber of $U$.

The space $U_\infty$ was introduced by Berkovich in a private correspondence with Drinfeld in 2004. In his letter \cite{BeD}, Drinfeld asked if one can associate to a $k$-variety $U$ a geometric object (presumably a non-archimedean space) called ``infinity" of $U$ that might be considered as a sort of universal compactification of $U$. Drinfeld's suggestion was to pick up any compactification $U\into X$, to consider the completion $\gtX$ at infinity (i.e. the completion along $Z=X-U$), and to remove the closed fiber. Drinfeld's expectation was that such an infinity ``$\gtX-\gtX_s$" exists as a slightly generalized analytic space, is independent of the compactification, can be constructed by gluing the spaces $Y_i=\calM(A_i((f_i)))$ whenever $Z$ is Cartier (here $A((f_i))$ is as in Definition \ref{01def}(i)), and can be constructed in general from the pair $(\Bl_Z(X),Z\times_X\Bl_Z(X))$ instead of $(X,Z)$ (see \S\ref{invarsec} below). The subtle point pointed out by Drinfeld is that $Y_i$ has no canonical field of definition (as $k((f_i))$ is just one of many choices), so one should extend the category considered by Berkovich. We will outline in \S\ref{kspacesec} how this can be formalized.

In his answer, Berkovich suggested an approach that works perfectly well within the usual category of $k$-analytic spaces. He introduced $U_\infty$ (under the name ``infinity of $U$") and showed that, indeed, it depends only on $U$. On the topological level, the space $U_\infty$ is obtained from Drinfeld's suggestion by taking the product with $(0,1)$.

\section{Tubular descent: the general case}\label{gencase}

\subsection{Proof of the key lemma}\label{equivsec}
We already saw in the proof of Lemma \ref{GenCat} that certain $k$-analytic domains $Y'\into Y$ induce equivalences $\Coh(Y)\to\Coh(Y')$. Naturally, our study of sheaves on $V=X_\eta-Z_\eta$ will be based on finding some more domains that possess the above property.

\begin{proof}[Proof of Lemma \ref{equivlem}]
Step 1. {\it The lemma holds true when $X=\Spec(A)$ and $U=\Spec(A_f)$.} In this case, $V$ is the union of affinoid domains $V_r=\calM(A\{rf^{-1}\})$ with $0<r\le 1$. Clearly, $V_s\subseteq V_r$ when $0<r\le s\le 1$ and, although the norm of the Banach ring $A\{rf^{-1}\}$ depends on $r$, we have that $$A\{rf^{-1}\}=A\{rT\}/(Tf-1)=A[T]/(Tf-1)=A_f$$ on the level of $k$-algebras.

\begin{rem}
It is easy to see that if $f$ is non-invertible then $V_s\varsubsetneq V_r$ for $r<s$, and hence the norms on $A\{rf^{-1}\}$ and $A\{sf^{-1}\}$ are not even equivalent. This claim will not be used, so we only hint at the argument: $X_\eta$ contains a point $|\ |_x$ with $0<|f|_x<1$ hence an appropriate power $|\ |_y=|\ |_x^t$ satisfies $r<|f|_y<s$. So, $y\in V_r\setminus V_s$.
\end{rem}

From the above chain of equalities we have canonical equivalences $\Coh(A_f)\toisom\Coh(V_r)$ for any $0<r<1$ which are compatible with the restrictions $\phi_{r,s}\:\Coh(V_r)\to\Coh(V_s)$ for $0<r\le s\le 1$. In particular, these restrictions $\Coh(V_1)\to\Coh(V_r)$ are equivalences compatible with the equivalences $\phi_{r,s}$ and hence $\Coh(V)\toisom\Coh(V_1)$. It remains to observe that for $r=1$ we have that $V_r=\calM(A\{f^{-1}\})=\calM(A_f)$ where $A_f$ is provided with the trivial norm, hence $V_1=U_\eta$.

Step 2. {\it The lemma holds when $X=\Spec(A)$.} Let $Z$ be given by an ideal $I=(f_1\. f_n)$. Then $Z$ is the intersection of $Z_i=\Spec(A/(f_i))$, $U$ is the union of $U_i=\Spec(A_{f_i})$, and $V$ is the union of $V_i=X_\eta-(Z_i)_\eta$. Let $U_{ij}$ and $V_{ij}$ (resp. $U_{ijk}$ and $V_{ijk}$) denote the double (resp. triple) intersections, and let $Z_{ij}=Z_i\cup Z_j$. Note that $V_{ij}=X_\eta-(Z_{ij})_\eta$ because $(Z_{ij})_\eta=(Z_i)_\eta\cup(Z_j)_\eta$, and similarly for the triple intersections. By Step 1 we have equivalences $\Coh(V_i)\toisom\Coh((U_i)_\eta)$, $\Coh(V_{ij})\toisom\Coh((U_{ij})_\eta)$, and $\Coh(V_{ijk})\toisom\Coh((U_{ijk})_\eta)$ that are, clearly, compatible with the restrictions between $V$'s and $U_\eta$'s. Therefore, Lemma \ref{stackgluelem} implies that these equivalences glue to the equivalence $\Coh(V)\toisom\Coh(U_\eta)$.

Step 3. {\it The lemma holds when $X$ is separated.} In this case we find an affine open covering $X=\cup_{i\in I} X_i$ and note that for any non-empty $J\subseteq I$ the intersection $X_J=\cap_{j\in J}X_j$ is affine (as earlier, we will only need double and triple intersections). Set $U_J=U\times_XX_J$, $Z_J=Z\times_XX_J$, and $V_J=(X_J)_\eta-(Z_J)_\eta$. Note that $(Z_J)_\eta=Z_\eta\cap(X_J)_\eta$ and hence $V_J=V\cap(X_J)_\eta$. In particular, $V_{J\cup J'}=V_J\cap V_{J'}$ and $U_{J\cup J'}=U_J\cap U_{J'}$. By Step 2, $\Coh(U_J)\toisom\Coh(V_J)$ for any non-empty $J$, and clearly these equivalences are compatible with the restrictions. Therefore, they glue to an equivalence $\Coh(U)\toisom\Coh(V)$.

Step 4. {\it The general case.} We cover $X$ by separated open subschemes $X_i$ and repeat the argument of Step 3, but with the equivalence from Step 3 used as an input.
\end{proof}

We saw that the space $W$ plays the crucial role in the tubular descent. The aim of the next three sections is to study $W$ in more detail.

\subsection{A closer look at $W$: affine Cartier case}\label{affcasesec}
Let us consider the case when $X=\Spec(A)$ and $U=\Spec(A_f)$, in particular, $Z=V(f)$ and $\gtX=\Spf(\hatA)$ where $\hatA$ is the $(f)$-adic completion of $A$. Actually, this is the simplest case that was established first in our proof of Lemma \ref{equivlem}. In this case we can describe $W$ very concretely. In the sequel, for any $0<r<1$ let $K_r$ denote the field $k((f))$ provided with the valuation $|\ |_r$ which is trivial on $k[[f]]^\times$ and satisfies $|f|_r=r$.

\begin{lem}\label{affcaselem}
Assume that $X=\Spec(A)$ and $U=\Spec(A_f)$. Let $\hatA$ denote the $(f)$-adic completion of $A$, let $B=\hatA_f=\hatA[f^{-1}]$, and let $\calB_r$ be the normed $k$-algebra $(B,\|\ \|_r)$, where $\|x\|_r=r^{-n}$ for the minimal $n\in\bfZ$ such that $f^nx\in\hatA$.

(i) $W$ is the subdomain of $X_\eta$ given by the condition $0<|f|<1$.

(ii) $\calO_{X_\eta}(W)\toisom B$ and $\Coh(B)\toisom\Coh(W)$.

(iii) Each point of $W$ induces a semivaluation on $B$ that restricts to a valuation $|\ |_r$ on $k((f))\subseteq B$, so a map $W\to(0,1)$ arises. If $0<r<1$ is fixed, then $\calB_r$ is a $K_r$-affinoid algebra and the fiber of $W$ over $r$ is homeomorphic to the $K_r$-affinoid space $W_r=\calM(\calB_r)$. If we identify the topological space $W_r$ with other fibers $W_s$ by sending a semivaluation to its $\frac sr$-powers then the fiber homeomorphisms glue to a homeomorphism $W\toisom W_r\times(0,1)$.
\end{lem}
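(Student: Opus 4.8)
The plan is to treat the three parts in order, reducing (ii) to the computation already made in Step 1 of Lemma \ref{equivlem}, and viewing (iii) as a change of the ground field from the trivially valued $k$ to the discretely valued field $K_r=k((f))$.

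For (i), recall that $X_\eta=\calM(A)$ with the trivial norm, so its points are the bounded semivaluations of $A$ (those with $|a|\le 1$ for all $a$). Specializing Example \ref{spacesexam} to the single generator $f$ of the ideal of $Z$ gives $Z_\eta=\{|f|=0\}$ and $U_\eta=\{|f|=1\}$, whence $W=X_\eta-U_\eta-Z_\eta=\{0<|f|<1\}$. I would then cover $W$ by the rational (hence affinoid) subdomains $W_{r,s}=\{r\le|f|\le s\}$ for $0<r\le s<1$; since $U_\eta,Z_\eta$ are compact and $|f|$ is continuous, each $x\in W$ lies in some $W_{r,s}$ that contains the neighborhood $\{r<|f|<s\}$ of $x$, so the $W_{r,s}$ form a quasi-net and the covering is admissible.

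For (ii) the plan is to compute each affinoid algebra $\calO(W_{r,s})$ on the level of $k$-algebras. Imposing $|f|\le s<1$ gives the Weierstrass algebra $A\{s^{-1}f\}=A[[T]]/(T-f)=\hatA$ (here one uses that over a trivially valued field $A\{s^{-1}T\}=A[[T]]$ as a ring when $s<1$), and then imposing $|f|\ge r$ gives the Laurent algebra $\hatA\{rf^{-1}\}=\hatA[f^{-1}]=B$, exactly as in Step 1 of Lemma \ref{equivlem}. Thus the underlying $k$-algebra of $\calO(W_{r,s})$ is $B$ for every $(r,s)$, only the norm depending on $(r,s)$, and every restriction map $\calO(W_{r,s})\to\calO(W_{r',s'})$ is the identity of $B$. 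Hence $\calO(W)=\varprojlim_{r,s}\calO(W_{r,s})=B$, and the canonical equivalences $\Coh(W_{r,s})\toisom\Coh(B)$ (via \cite[2.1.9]{berbook}) are compatible with restriction, so Lemma \ref{angluelem} (equivalently \ref{stackgluelem}) glues them to $\Coh(W)\toisom\Coh(B)$.

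For (iii), since $\calO(W)=B$ every $x\in W$ gives a semivaluation $|\ |_x$ on $B$; using that $x\in\gtX_\eta$ forces $|\ |_x\le 1$ on $\hatA$, any $u\in k[[f]]^\times$ (a unit of $\hatA$ with unit inverse) has $|u|_x=1$, so $|\ |_x$ restricts to $|\ |_r$ on $k((f))$ with $r=|f|_x$, producing the continuous map $W\to(0,1)$, $x\mapsto|f|_x$. Next I would note that $\hatA$ is an adic, topologically finitely presented $K_r^\circ=k[[f]]$-algebra, so its generic fiber over the discretely valued $K_r$ is the $K_r$-affinoid algebra $\hatA\otimes_{K_r^\circ}K_r=\calB_r$, and identify the fiber $W_r$ over $r$ with $\calM(\calB_r)$ by matching points: for $y\in\hatA$ with $\ord_f(y)=m$ one has $|y|_x=r^{m}|y/f^{m}|_x\le r^{m}=\|y\|_r$, so a point of $W$ over $r$ is $\|\ \|_r$-bounded, while conversely a point of $\calM(\calB_r)$ satisfies $|f|=r$ and $|\ |\le 1$ on $\hatA$; the two topologies agree since both are generated by the functions $|g|$, $g\in B$. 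Finally, for the product decomposition I would use the $\bfR$-action by exponentiation $|\ |\mapsto|\ |^{t}$ ($t>0$), well defined and continuous on $W$ because all values lie in $[0,1)$ and the ultrametric inequality is preserved, and which scales $|f|$ to $|f|^{t}$; then $x\mapsto\bigl((\tfrac{\log r}{\log|f|_x})\cdot x,\ |f|_x\bigr)$ is a homeomorphism $W\toisom W_r\times(0,1)$ with inverse $(y,s)\mapsto(\tfrac{\log s}{\log r})\cdot y$, which is the ``raising to a power'' identification of the fibers asserted in the statement. The main obstacle is exactly this last part: one must check carefully that the $f$-adic norm $\|\ \|_r$ is (equivalent to) the $K_r$-affinoid norm on $\calB_r$ and that the set-theoretic matching $W_r=\calM(\calB_r)$ is a homeomorphism; the remaining steps are either immediate from the definitions or a direct globalization of Step 1 of Lemma \ref{equivlem}.
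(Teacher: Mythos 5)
Your proposal is correct and follows essentially the same route as the paper's (much terser) proof: cover $W$ by the rational domains $\{r\le|f|\le s\}$, observe that each has underlying $k$-algebra $B=\hatA_f$ with only the norm depending on $(r,s)$, deduce (ii) by gluing, and obtain (iii) by taking $s=r$ and identifying the fibers via exponentiation of semivaluations. The extra details you supply (admissibility of the covering, the identification of the fiber with $\calM(\calB_r)$, and the explicit homeomorphism $W\toisom W_r\times(0,1)$ with exponent $\log s/\log r$) are all consistent with what the paper leaves implicit.
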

\begin{proof}
(i) is a particular case of Example \ref{spacesexam}. Next, note that $W=\cup_{0<r\le s<1}W_{[r,s]}$ where
$$W_{[r,s]}=X_\eta\{r\le|f|\le s\}=\calM(A\{s^{-1}x,rx^{-1}\}/(x-f))$$ In particular, $\calO_{X_\eta}(W_{[r,s]})\toisom A((t))/(t-f)\toisom B$ for any choice of $0<r\le s<1$, and hence $\Coh(B)\toisom\Coh(W_{[r,s]})$. This implies (ii). Finally, taking $s=r$ we obtain that $W_r$ is as asserted in (iii), and the remaining claim of (iii) follows.
\end{proof}

Let us make a few comments concerning the lemma.

\begin{rem}
(i) A naive algebraic attempt to remove the special fiber from $\gtX$ is to formally localize $\hatA$ at the topologically nilpotent element $f$. Such operation (similarly to the usual localization at a nilpotent element) produces the zero ring $\hatA_{\{f\}}=0$. So, as one should expect, $\gtX-\gtX_s=\emptyset$.

(ii) The correct way to invert $f$ is to consider the usual localization $B=\hatA_f$. One can then induce a topology on $B$ by declaring $\hatA$ to be open. Such rings (with an open {\em ring of definition} whose topology is adic) were called f-adic by R. Huber. Actually, they form a very natural class of topological rings. For example, the natural topology on a complete height one valuation field makes it into a f-adic ring. We have shown that in the affine situation described in the lemma $\calO_{X_\eta}(W)$ is f-adic.
\end{rem}

\begin{defin}\label{01def}
(i) If $A$ is a finitely generated $k$-algebra and $f\in A$ then by $A((f))$ we denote the f-adic ring $\hatA_f$, where $\hatA$ is the $(f)$-adic completion of $A$. By $A((f))_r$ we denote the $K_r$-affinoid algebra whose underlying topological algebra is $A((f))$.

(ii) A $k$-analytic space $W$ is {\em $(0,1)$-affinoid} if there exists an affine $k$-variety $X=\Spec(A)$ with a closed subvariety $Z=\Spec(A/(f))$ and open subvariety $U=\Spec(A_f)$ such that $W$ is isomorphic to $X_\eta-U_\eta-Z_\eta$.
\end{defin}

\subsection{A closer look at $W$: Cartier case}
In general, $W$ can be covered by $(0,1)$-affinoid open subdomains $W_i$, so $\Coh(W)$ can be expressed in terms of $\Coh(W_i)$ which are just categories of finite modules over $k$-algebras of the form $A((f))$. This observation is contained implicitly in the proof of Lemma \ref{affcaselem}, so we just have to explicate what happens to $W$ throughout the steps of that proof. The case when $Z$ is a Cartier divisor is easier, so we consider it first.

\begin{lem}\label{Wlem}
Assume that $X$ is a $k$-variety, $Z\into X$ is a Cartier divisor and $U=X-Z$. Choose any covering of $X$ by open affine subschemes $X_i=\Spec(A_i)$ such that each $Z_i=Z\times_XX_i$ is given by a single function $f_i$. Set $U_i=X_i-Z_i$. Then we have the following.

(i) The $k$-analytic space $W=X_\eta-U_\eta-Z_\eta$ is admissibly covered by $(0,1)$-affinoid subdomains $W_i=(X_i)_\eta-(U_i)_\eta-(Z_i)_\eta$ and $\Coh(W_i)\toisom\Coh(A_i((f_i)))$.

(ii) If an intersection $X_{ij}=X_i\cap X_j$ is affine, say $X_{ij}=\Spec(A_{ij})$, then the intersection $W_{ij}=W_i\cap W_j$ is $(0,1)$-affinoid and $\Coh(W_{ij})\toisom\Coh(A_{ij}((f_i)))$. In particular, if $X$ is separated then all intersections $W_{ij}$ are $(0,1)$-affinoid.
\end{lem}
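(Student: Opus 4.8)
The plan is to reduce both parts to the affine Cartier computation of Lemma~\ref{affcaselem}, reading off the two category equivalences directly from it, and to deduce admissibility of the covering not from the variety cover $X=\cup_i X_i$ but from the associated open cover of the formal model $\gtX$. Since completion localizes, the completions $\gtX_i=\widehat{(X_i)}_{Z_i}$ of the charts along $Z_i=Z\cap X_i$ are open formal subschemes of $\gtX=\hatX_Z$, with $\gtX_i\cap\gtX_j=\widehat{(X_{ij})}_{Z_{ij}}$ (where $Z_{ij}=Z\cap X_{ij}$), and they cover $\gtX$ because the $X_i$ cover $X\supseteq Z$.

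For part~(i) I would first identify each $W_i$. Applying the relation $\gtX_\eta=X_\eta-U_\eta$ of \S\ref{Wsec} to the chart $(X_i,Z_i,U_i)$ with $U_i=\Spec (A_i)_{f_i}$ gives $(\gtX_i)_\eta=(X_i)_\eta-(U_i)_\eta$, a compact (closed) subdomain of $\gtX_\eta$, and intersecting with $W=\gtX_\eta-Z_\eta$ yields $W_i=W\cap(\gtX_i)_\eta=(X_i)_\eta-(U_i)_\eta-(Z_i)_\eta$. As $(X_i,Z_i,U_i)$ is precisely the affine Cartier situation, $W_i$ is $(0,1)$-affinoid by Definition~\ref{01def}(ii) and $\Coh(W_i)\toisom\Coh(A_i((f_i)))$ by Lemma~\ref{affcaselem}(ii). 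Admissibility of $\{W_i\}$ is where the real care is needed: the $W_i$ are compact, not open, so one cannot argue via an open cover. Instead I would use that, by the gluing construction of the generic fiber in \S\ref{gensec}, $\gtX_\eta$ is glued from the subdomains $(\gtX_i)_\eta$ along the $(\gtX_{ij})_\eta$; in particular $\{(\gtX_i)_\eta\}$ is a quasi-net and hence, by the criterion recalled in \S\ref{constrsec}, an admissible covering of $\gtX_\eta$. Restricting this admissible covering to the open subdomain $W\subseteq\gtX_\eta$ preserves the quasi-net property (for $x\in W$ take the same finite $J$ and intersect the witnessing neighbourhood with $W$), so $\{W_i\}$ is an admissible covering of $W$.

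For part~(ii), the same identification gives $W_{ij}=W_i\cap W_j=W\cap(\gtX_{ij})_\eta=(X_{ij})_\eta-(U_{ij})_\eta-(Z_{ij})_\eta$, using $(\gtX_i)_\eta\cap(\gtX_j)_\eta=(\gtX_{ij})_\eta$ and $U_{ij}=U\cap X_{ij}$. When $X_{ij}=\Spec A_{ij}$ is affine this is again an affine Cartier triple: on the overlap $Z_{ij}$ is cut out both by the restriction of $f_i$ and by that of $f_j$, and since $Z$ is Cartier these are two generators of the same principal ideal of nonzerodivisors, hence differ by a unit of $A_{ij}$, so that $A_{ij}((f_i))=A_{ij}((f_j))$ as f-adic rings and the choice is harmless. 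Thus $W_{ij}$ is $(0,1)$-affinoid with $\Coh(W_{ij})\toisom\Coh(A_{ij}((f_i)))$ by Lemma~\ref{affcaselem}(ii). The final assertion is immediate, since in a separated scheme the intersection of two affine opens is affine.

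The main obstacle, as indicated, is the admissibility in part~(i). The naive temptation is to treat $\{(X_i)_\eta\}$ as an open covering of $X_\eta$, but generic fibers of open immersions are closed (compact) subdomains, so this fails; the correct input is that the quasi-net property is already built into the construction of $\gtX_\eta$ from its formal model. Everything else is a bookkeeping reduction to Lemma~\ref{affcaselem}, together with the compatibilities of the generic fiber with open immersions and intersections already exploited in Step~3 of the proof of Lemma~\ref{equivlem}.
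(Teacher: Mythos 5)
Your proof is correct and follows the paper's reduction to Lemma \ref{affcaselem} essentially verbatim; the only genuine divergence is in how you justify admissibility of the covering $W=\cup_i W_i$, and there your framing of the ``obstacle'' is slightly off. You claim that working with $\{(X_i)_\eta\}$ as a covering of $X_\eta$ ``fails'' because the $(X_i)_\eta$ are compact rather than open; but the paper's proof does exactly that, observing that a \emph{finite} covering by compact analytic domains is automatically a quasi-net (for $s\in X_\eta$ take $J=\{j: s\in (X_j)_\eta\}$; the finitely many remaining compact pieces form a closed set missing $s$, so $\cup_{j\in J}(X_j)_\eta$ is a neighborhood of $s$), hence admissible, and then restricts to $W$. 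Your alternative --- reading the quasi-net property off the gluing of $\gtX_\eta$ from the $(\gtX_i)_\eta$ in \S\ref{gensec} and restricting to the open subset $W\subseteq\gtX_\eta$ --- is equally valid and arguably makes the mechanism more transparent, but it is not needed to rescue a failing argument; both routes rest on the same quasi-net criterion. Your extra observation in (ii) that $f_i$ and $f_j$ generate the same invertible ideal on $X_{ij}$ and hence differ by a unit is correct and harmless, though the paper sidesteps it by simply using $f_i$ as the equation of $Z_{ij}$ on the overlap.
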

\begin{proof}
Since $(U_i)_\eta=U_\eta\cap (X_i)_\eta$ and $(Z_i)_\eta=Z_\eta\cap(X_i)_\eta$, we have that $W_i=W\cap(X_i)_\eta$. Since $(X_i)_\eta$ form an admissible covering of $X_\eta$ (as these is a finite covering by compact domains), the covering $W=\cup_iW_i$ is admissible. The second assertion of (i) follows from Lemma \ref{affcaselem}(ii). To verify (ii), set $U_{ij}=U_i\cap U_j=U\cap X_{ij}$ and similarly for $Z_{ij}$ and note that $W_{ij}=(X_{ij})_\eta-(U_{ij})_\eta-(Z_{ij})_\eta$. Since $Z_{ij}$ is given by $f_i$ in $X_{ij}$, we obtain (ii).
\end{proof}

The lemma gives a simple way to compute $\Coh(W)$ when $X$ is separated. In general, the intersections $W_{ij}$ may be only finite unions of $(0,1)$-affinoid domains. So, one may first use the lemma to compute $\Coh(W_{ij})$ and $\Coh(W_{ijk})$, and then compute $\Coh(W)$ as a second step.

\subsection{A closer look at $W$: the general case}
If $Z$ is arbitrary it is still possible to describe a $(0,1)$-affinoid covering $W=\cup_i W_i$. The formulas for intersection are simple but lengthy, so we ignore them for shortness.

\begin{lem}\label{Wgenlem}
Assume that $X$ is a $k$-variety, $Z\into X$ is a closed subvariety and $U=X-Z$. Choose any covering of $X$ by open affine subschemes $X_i=\Spec(A_i)$, for each $i$ choose elements $f_{i1}\.f_{in_i}\in A_i$ such that $Z\times_XX_i$ is of the form $V(f_{i1}\.f_{in_i})$. Then the $k$-analytic space $W=X_\eta-U_\eta-Z_\eta$ is admissibly covered by $(0,1)$-affinoid subdomains $W_{ij}$ such that $\calO_{X_\eta}(W_{ij})=B_{ij}$ and $\Coh(W_{ij})\toisom\Coh(B_{ij})$ for $B_{ij}=A_i((f_{ij}))\left\{\frac{f_{i1}}{f_{ij}}\.\frac{f_{in_i}}{f_{ij}}\right\}$.
\end{lem}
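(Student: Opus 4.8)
The plan is to reduce the statement to the affine Cartier case already settled in Lemma \ref{affcaselem}, by first localizing on $X$ and then, on each affine chart, passing to the ``dominant coordinate'' loci, each of which is governed by a single function.

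First I would cover $W$ by the local pieces $W_i = W \cap (X_i)_\eta$. Since each $(X_i)_\eta = \calM(A_i)$ is compact (the $A_i$ are finitely generated, hence $k$-affinoid for the trivial norm) and finitely many of them cover $X_\eta$, the covering $X_\eta = \cup_i (X_i)_\eta$ is admissible, and therefore so is $W = \cup_i W_i$, exactly as in Lemma \ref{Wlem}(i). By Example \ref{spacesexam}, $W_i$ is the locus in $(X_i)_\eta$ where $|f_{ik}| < 1$ for all $k$ and $|f_{ik}| > 0$ for some $k$. On each $W_i$ I would then introduce the subdomains $W_{ij} = \{x \in W_i : |f_{ik}(x)| \le |f_{ij}(x)| \text{ for all } k\}$ (for $j = 1,\ldots,n_i$), the loci on which $f_{ij}$ has maximal absolute value. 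Since at every $x\in W_i$ some $f_{ik}(x)$ is nonzero, the indices $j$ at which $|f_{ij}(x)|$ is maximal give a nonempty finite subfamily of the $W_{ij}$ containing $x$ whose union is, by continuity of the absolute values, a neighborhood of $x$; hence the $W_{ij}$ form a quasi-net and $\{W_{ij}\}_j$ is an admissible covering of $W_i$. Together with the previous step this exhibits $\{W_{ij}\}_{i,j}$ as an admissible covering of $W$.

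Next I would compute $\calO_{X_\eta}(W_{ij})$. On $W_{ij}$ one has $0 < |f_{ij}| < 1$ and $|f_{ik}/f_{ij}| \le 1$, so $W_{ij}$ sits inside the Cartier $(0,1)$-affinoid $\wt W_{ij} = (X_i)_\eta - (\Spec (A_i)_{f_{ij}})_\eta - V(f_{ij})_\eta = \{0 < |f_{ij}| < 1\}$, for which Lemma \ref{affcaselem} gives $\calO(\wt W_{ij}) \toisom A_i((f_{ij}))$ and $\Coh(\wt W_{ij}) \toisom \Coh(A_i((f_{ij})))$. Inside $\wt W_{ij}$ the subdomain $W_{ij}$ is carved out by $|f_{ik}/f_{ij}| \le 1$; as the elements $f_{ik}/f_{ij}$ already lie in $A_i((f_{ij}))$, this is a rational (Weierstrass) localization, and the standard formula for such domains yields $\calO_{X_\eta}(W_{ij}) \toisom A_i((f_{ij}))\{f_{i1}/f_{ij},\ldots,f_{in_i}/f_{ij}\} = B_{ij}$. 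Concretely, writing $\wt W_{ij}$ as the increasing union of the affinoids $\{r \le |f_{ij}| \le s\}$ over $0 < r \le s < 1$ as in the proof of Lemma \ref{affcaselem}, each intersection with $W_{ij}$ is an affinoid whose underlying $k$-algebra is $B_{ij}$ (only the Banach norm varying with $r,s$), and the base-change equivalences along these bijective, non-isometric transition maps glue to $\Coh(W_{ij}) \toisom \Coh(B_{ij})$, just as in Lemmas \ref{GenCat} and \ref{affcaselem}. Note that this part of the argument delivers both the ring $B_{ij}$ and the equivalence of categories without any further input.

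It remains to see that $W_{ij}$ is genuinely $(0,1)$-affinoid in the sense of Definition \ref{01def}, i.e. that $B_{ij}$ has the form $A'((f'))$ for an affine $k$-variety $\Spec(A')$ carrying a Cartier divisor $V(f')$. The natural candidate is the $j$-th chart $C_{ij} = A_i[f_{i1}/f_{ij},\ldots,f_{in_i}/f_{ij}] \subseteq (A_i)_{f_{ij}}$ of the blow-up $\Bl_Z(X)$ along $Z$, on which $Z$ pulls back to the Cartier divisor $V(f_{ij})$; here the blow-up reappears as foreshadowed in \S\ref{propersec}. Applying Lemma \ref{affcaselem} to $(C_{ij}, f_{ij})$ identifies the Cartier $(0,1)$-affinoid $W(C_{ij}, V(f_{ij}))$, with $\calO = C_{ij}((f_{ij}))$, and the morphism $\Spec(C_{ij}) \to X_i$ induces on generic fibers an isomorphism of this space onto $W_{ij}$ (on points: a bounded semivaluation on $C_{ij}$ with $0 < |f_{ij}| < 1$ restricts on $A_i$ to a point of $W_{ij}$, and conversely a point of $W_{ij}$, having $|f_{ij}| > 0$, extends uniquely to $C_{ij}$ with $|f_{ik}/f_{ij}| \le 1$). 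I expect the main obstacle to be the accompanying commutative-algebra identity $C_{ij}((f_{ij})) \toisom B_{ij}$ --- that $(f_{ij})$-adically completing the blow-up chart and inverting $f_{ij}$ produces the same ring as completing $A_i$ along $f_{ij}$, inverting $f_{ij}$, and then adjoining the power-bounded ratios $f_{ik}/f_{ij}$. This is the ``simple but lengthy'' bookkeeping referred to in the text: it holds because the completion variable $f_{ij}$ has radius $< 1$, so that direction genuinely completes, while the ratio variables have radius $1$, so that their adjunction is compatible with the $(f_{ij})$-adic completion; one checks that the evident map $B_{ij} \to C_{ij}((f_{ij}))$ is bijective by comparing the two descriptions coefficientwise. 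Once this identity is in hand, $W_{ij}$ is $(0,1)$-affinoid with $\calO_{X_\eta}(W_{ij}) = B_{ij}$ and $\Coh(W_{ij}) \toisom \Coh(B_{ij})$, which completes the proof.
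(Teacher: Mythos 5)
Your proposal is correct and follows essentially the same route as the paper: localize to affine charts, pass to the dominant-coordinate loci $W_{ij}=\{|f_{ik}|\le|f_{ij}|\}$, identify each with the punctured tubular neighborhood of the $j$-th chart of $\Bl_Z(X)$ (where $Z$ becomes Cartier and Lemma \ref{affcaselem} applies), and verify everything on the annuli $\{r\le|f_{ij}|\le s\}$, which are the same rational domains $\calM(A_i\{s^{-1}f_{ij},rf_{ij}^{-1},f_{i1}/f_{ij}\.f_{in_i}/f_{ij}\})$ on both sides. You supply somewhat more detail than the paper on the quasi-net admissibility argument and on the identity $C_{ij}((f_{ij}))\toisom B_{ij}$, which the paper asserts without proof.
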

\begin{proof}
As in the proof of the previous lemma, we can easily localize $X$ reducing to the case when $X=\Spec(A)$ and $Z=\Spec(A/(f_1\. f_n))$. Since the simultaneous vanishing locus of $f_i$'s on $X_\eta$ is $Z_\eta$, the $f_i$'s do not vanish simultaneously on $W$. In particular, $W$ is the union of subdomains $$W_j=W\left\{\frac{f_1}{f_j}\.\frac{f_n}{f_j}\right\}=\{x\in W|\ |f_i(x)|\le|f_j(x)|, 1\le i\le n\}$$
The covering is admissible because the conditions cutting $W_j$'s are closed, so it remains to check that each $W_j$ is $(0,1)$-affinoid and $\calO_{X_\eta}(W_j)$ is isomorphic to $B_j=A((f_j))\left\{\frac{f_1}{f_j}\.\frac{f_n}{f_j}\right\}$.

Fix $j$ and consider the $A$-algebra $A'=A[\frac{f_1}{f_j}\.\frac{f_n}{f_j}]\subseteq A_{f_j}$ and the morphism $X'=\Spec(A')\to X$ (it is the $j$-th chart of the blow up $\Bl_Z(X)\to X$). Let $U'$ and $Z'$ be the preimages of $U$ and $Z$. Since $Z'=V(f_j)$ and $A'((f_j))\toisom B_j$, it suffices to check that $W'=X'_\eta-U'_\eta-Z'_\eta$ is mapped isomorphically onto $W_j$ by the morphism $X'_\eta\to X_\eta$. For this we set $W_{[r,s]}=W_j\{r\le|f_j|\le s\}$ and $W'_{[r,s]}=W'\{s\le|f_j|\le r\}$, so that $W_j=\cup_{0<r\le s<1} W_{[r,s]}$ (because $0<|f_j|<1$ on $X_\eta-U_\eta\supset W$) and similarly $W'=\cup_{0<r\le s<1} W'_{[r,s]}$, and note that by a simple computation both $W_{[r,s]}$ and $W'_{[r,s]}$ are rational domains isomorphic to $\calM(A\{s^{-1}f_j,rf_j^{-1},\frac{f_1}{f_j}\.\frac{f_n}{f_j}\})$.
\end{proof}

\subsection{Invariance of $W$ under modifications}\label{invarsec}
The interpretation of $W$ in \S\ref{propersec} suggests that it should only depend on $U$ and the birational class of $X$. Namely, $W$ should not change when we modify $X$ so that $U$ is preserved.\footnote{This was already proven by Thuillier, see \cite[Prop. 1.11]{Th}. Our proof is different.}

\begin{prop}\label{invarprop}
Assume that $X$ is a $k$-variety, $U\into X$ is a dense open subvariety, and $f\:X'\to X$ is a proper morphism, which is an isomorphism over $U$. Then the morphism $f_\eta\:X'_\eta\to X_\eta$ maps $W'$ isomorphically onto $W$, where, as usually, we set $Z=X-U$, $Z'=X'-U$, $W=X_\eta-U_\eta-Z_\eta$, and $W'=X'_\eta-U_\eta-Z'_\eta$.
\end{prop}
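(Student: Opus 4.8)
The plan is to describe $W$ and $W'$ intrinsically through the reduction map, produce the correspondence $W'\leftrightarrow W$ pointwise by the valuative criterion of properness, and only at the end upgrade this correspondence to an isomorphism of $k$-analytic spaces.

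First I would record the set-theoretic picture. Recall from \S\ref{gensec} that a point $x\in X_\eta$ is a bounded semivaluation on $\calO_X$; since $k$ is trivially valued every function satisfies $|a|\le 1$, so $x$ determines a local homomorphism $\calO_{X,\tilx}\to\calH(x)^{\circ}$, i.e. a morphism $\Spec(\calH(x)^{\circ})\to X$ sending the generic point to the support $\gtp_x$ and the closed point to the reduction $\tilx=\pi_X(x)$. In these terms $x\in\gtX_\eta=\pi_X^{-1}(Z)$ means $\tilx\in Z$, while $x\in Z_\eta$ means $\gtp_x\in Z$; hence $x\in W$ exactly when $\tilx\in Z$ and $\gtp_x\in U$. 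Because $f$ is an isomorphism over $U$ we have $Z'=f^{-1}(Z)$, and the reduction maps are functorial ($\pi_X\circ f_\eta=f\circ\pi_{X'}$), so a diagram chase with supports and centers gives $f_\eta^{-1}(\gtX_\eta)=\gtX'_\eta$ and $f_\eta^{-1}(W)=W'$. Writing $\gtX'=\hatX'_{Z'}$, the induced map $\hatf\:\gtX'\to\gtX$ is the completion of a proper morphism along $Z'=f^{-1}(Z)$, hence proper, so its generic fibre $\hatf_\eta\:\gtX'_\eta\to\gtX_\eta$ is proper. Consequently $f_\eta$ restricts to a proper morphism $W'\to W$, namely the base change of $\hatf_\eta$ along the open immersion $W\into\gtX_\eta$.

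Next I would build the inverse on points. Given $x\in W$, the associated morphism $\Spec(\calH(x)^{\circ})\to X$ sends the generic point into $U$ (where $f$ is an isomorphism) and the closed point into $Z$. Thus the generic point lifts canonically and uniquely to $f^{-1}(U)=U\subseteq X'$, and the valuative criterion of properness for the proper morphism $f$ produces a unique morphism $\Spec(\calH(x)^{\circ})\to X'$ over $X$. This lift is a point $x'\in X'_\eta$ with $f_\eta(x')=x$; its center lies in $f^{-1}(Z)=Z'$ and its support in $U$, so $x'\in W'$. Since the support of $x'$ lies in $U$, where $f$ is an isomorphism, one has $\calH(x')\toisom\calH(x)$, and uniqueness in the valuative criterion shows $x'$ is the only preimage of $x$ in $W'$. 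Hence $f_\eta\:W'\to W$ is a bijection inducing isomorphisms on all completed residue fields.

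Finally I would upgrade this bijection to an isomorphism of $k$-analytic spaces, which I expect to be the main obstacle. Heuristically, $f_\eta\:W'\to W$ is proper and injective with residue-field isomorphisms, so the diagonal $W'\to W'\times_WW'$ is an isomorphism, making $f_\eta$ a proper monomorphism and hence a closed immersion, which being bijective ought to be an isomorphism. The delicate point is justifying these implications in the Berkovich setting, and I would make it rigorous by working locally: by Remark \ref{Wrem}(ii) both $W$ and $W'$ depend only on the formal completions $\gtX$ and $\gtX'=\hatX'_{Z'}$, so it suffices to treat the proper morphism $\hatf\:\gtX'\to\gtX$ of special formal schemes. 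Covering $W$ by the $(0,1)$-affinoid domains of Lemma \ref{Wgenlem}, properness guarantees that $(f_\eta)_*\calO_{W'}$ is coherent, and I would check that the comparison map $\calO_W\to(f_\eta)_*\calO_{W'}$ is an isomorphism — its bijectivity on points and the pointwise residue-field isomorphisms forcing the coherent map to be invertible — whence $f_\eta\:W'\toisom W$.
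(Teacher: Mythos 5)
Your route is genuinely different from the paper's, which compactifies $X$ by Nagata and reduces to the proper case, where both $W$ and $W'$ are identified with $U_\infty=U^\an-U_\eta$ and hence visibly depend only on $U$. Your set-theoretic analysis is correct and is a nice intrinsic reformulation: the description of $W$ as the locus of points with reduction in $Z$ and support in $U$, the identity $f_\eta^{-1}(W)=W'$ via functoriality of the reduction map, and the bijectivity of $f_\eta\colon W'\to W$ via the valuative criterion applied to $\calH(x)^{\circ}$ all go through.

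The gap is in the final step. The implication you rely on --- that a proper morphism which is bijective on points and induces isomorphisms of completed residue fields must have $\calO_W\toisom(f_\eta)_*\calO_{W'}$ --- is false. The analytification of the normalization $\nu\colon\bbA^1\to C$ of a cuspidal cubic is finite, bijective, and induces isomorphisms on all completed residue fields, yet $\calO_{C^\an}\to\nu_*\calO_{(\bbA^1)^\an}$ is not surjective at the cusp (already over a trivially valued field, by \S\ref{GAGAtriv} global functions are $k[t^2,t^3]\neq k[t]$). In your situation the conclusion is nevertheless true, but for a reason your pointwise data do not capture: on $W$ the ideal of $Z$ has been inverted (each $(0,1)$-affinoid chart has ring of the form $A((f_j))\{f_1/f_j\.f_n/f_j\}$ by Lemma \ref{Wgenlem}), and it is this inversion that kills the discrepancy between $X$ and $X'$ --- in the cusp example $k[[t^2,t^3]][1/t^2]=k((t))$, which is exactly $\calO(W')$. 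So to close the argument you must actually compare rings of functions on matching $(0,1)$-affinoid charts of $W$ and $W'$, as in Lemmas \ref{Wlem} and \ref{Wgenlem} (this is precisely the ``explicit, computational'' proof the paper mentions and deliberately avoids), or else use the compactification trick. Note also that the properness of $\hatf_\eta\colon\gtX'_\eta\to\gtX_\eta$, which you deduce from properness of $\hatf$, is itself a nontrivial theorem about generic fibers of formal schemes requiring a reference; once the chart-by-chart computation is done it is not needed at all.
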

\begin{proof}
One way to prove the proposition is to compare $W$ and $W'$ explicitly using Lemma \ref{Wlem}. We prefer another way, which is less computational and less elementary. (For simplicity, we will even use the Nagata compactification theorem, though this could be easily avoided with an appropriate version of Chow's Lemma.) Since the claim is local on $X$, we can assume that $X$ is affine. Fix a compactification $X\into\oX$, and let $\oZ$ be the closure of $Z$ and $\oU=\oX-\oZ$. Gluing $X'$ with $\oU$ along $U$ we obtain a separated morphism $X'\cup\oU\to\oX$, which is an isomorphism over $\oU$, and applying the Nagata compactification theorem we can extend the latter to a proper morphism $\of\:\oX'\to\oX$, so that $X'\cup\oU$ is dense in $\oX'$ and hence $\of$ is an isomorphism over $\oU$. Let $\oZ'$ be the preimage of $\oZ$. Then $\oW'=\oX'_\eta-\oU_\eta-\oZ'_\eta$ and $\oW=\oX_\eta-\oU_\eta-\oZ_\eta$ are isomorphic to $\oU_\infty$ by \S\ref{propersec}, hence $\of_\eta$ maps $\oW'$ isomorphically onto $\oW$. It remains to note that $W'=X'_\eta\cap\oW'$ is the preimage of $W=X_\eta\cap\oW$ because $X'_\eta$ is the preimage of $X_\eta$ under $\of_\eta$.
\end{proof}

\begin{rem}
Proposition \ref{invarprop} implies that $W$ is preserved when we replace $X$ with its blow up $X'$ along $Z$ (recall that $X'\to X$ is an isomorphism over $U$). Note that the preimage $Z'$ of $Z$ is a Cartier divisor, i.e. it is locally given by a single equation $f=0$. In particular, Lemma \ref{Wlem} applies to $W'$. On the other hand, it is easy to see that our direct description of $W$ in Lemma \ref{Wgenlem} is nothing else than the description of $W'$ in terms of Lemma \ref{Wlem} applied to the blow up charts.
\end{rem}

\subsection{Other approaches}\label{othersec}
In this section we briefly discuss two other ``geometric realizations" of the punctured tubular neighborhood of $Z$. Both interpret $W$ as the generic fiber of the formal scheme $\gtX$, but this time we do not view it as a $k$-formal scheme. The construction goes as follows: first one blows up $X$ along $Z$ reducing the construction to the case when $Z$ is a Cartier divisor, then one covers $X$ by $X_i=\Spec(A_i)$ so that $Z\cap X_i=V(f_i)$, and glues $W^*$ from $W^*_i$'s, which are (appropriate) spectra of the rings $A((f_i))$.

\subsubsection{Analytic $k$-spaces}\label{kspacesec}
One approach is to view $A((f_i))$ as a $k((f_i))$-affinoid algebra and to take $W^\an_i=\calM(A((f_i)))$. This is the approach outlined by Drinfeld in his letter to Berkovich. The technical obstacle is that our choice of the ground field $k((f_i))$ is not unique, and it is not clear in which category one should glue $W^\an$. If $Z$ is given by a global function $f\in\calO_X(X)$ then we can work with $k((f))$-analytic spaces, but this trick cannot work in general.

Recall that Berkovich considers in \cite{berbook} and \cite{berihes} the category of analytic $k$-spaces, whose objects consist of an analytic $k$-field $K$ and a $K$-analytic space $X$, and morphisms $(Y,L)\to(X,K)$ consist of an isometry $K\into L$ and a morphism $Y\to X\wtimes_KL$. This category is also too narrow because any of its objects possesses a canonical field of definition. The latter feature is too restricting even in some simple questions of Berkovich geometry; for example, if $X$ is $K$-affinoid and $X\to\tilX$ is the reduction map then the preimage of a non-closed point of $\tilX$ can be viewed as an analytic $K$-space, but the field of definition is non-canonical. Concerning the above example, it was checked by the second author (unpublished) that one can extend the category of analytic $K$-spaces as follows: one starts with Banach algebras that are affinoid over an analytic $K$-subfield (this subfield is not fixed) and considers all bounded $K$-homomorphisms between them. Then one considers their Berkovich spectra $\calM$ and glues general spaces imitating \cite[\S1]{berihes}. As an output one obtains a category of generalized analytic $K$-spaces which is larger than the category of analytic $K$-spaces in two senses: there are new objects (those that do not admit a global field of definition), and there are new morphisms between classical analytic $K$-spaces (those that do not preserve the field of definition).

Once the category of generalized analytic $k$-spaces is constructed, it is almost immediate that the gluing procedure from the beginning of \S\ref{othersec} constructs $W^\an$ as such a space. On the topological level, one has that $W\toisom W^\an\times(0,1)$. Also, it is rather immediate from Lemma \ref{Wlem} that $\Coh(W)\toisom\Coh(W^\an)$.

\subsubsection{Adic spaces}\label{adicspaces} Another way to consider the general fiber of $\gtX$ is to work with adic spaces of R. Huber, see \cite{Hub}. In this case one glues $W^\ad=\gtX_\eta^\ad$ from affine adic spaces $\Spa(A((f_i)),A[[f_i]])$. Note also, that there is an adic analog of formal schemes, for example, $\gtX^\ad$ is glued from the affine adic spaces $\Spa(A[[f_i]],A[[f_i]])$, and $\gtX_\eta^\ad$ is an open subspace of $\gtX^\ad$ locally given by the non-vanishing of $f_i$ (so, it can be literally viewed as the generic fiber of $\gtX^\ad$). It seems that using adic spaces one can extend our results to the case when $X$ is an arbitrary noetherian scheme.

\section{Examples and discussion}\label{ExampleDiscuss}
\subsection{Examples}
\begin{exam}
Let $X$ be a separated $k$-variety and suppose we choose a finite open affine cover by varieties $X_{i}= \Spec(A_{i})$ where $A_{i}$ are $k$-algebras of finite type and the natural maps $A_{i} \to \mathcal{O}(X_{i} \times_{X} X_{j})$ are all localizations by some elements $g_{i,j} \in A_{i}$.   Suppose that $Z$ is defined in $X_{i}$ by one equation $f_{i} \in A_{i}$. Then the categories $\Coh(X)$, $\Coh(\widehat{X}_{Z})$, $\Coh(U)$ and $\Coh(W)$ can all be described in terms of modules.  Namely, $\Coh(X)$ is a category of $M_{i} \in \Coh(A_{i})$ and gluing isomorphisms $(M_{i})_{g_{ij}} \cong (M_{j})_{g_{ji}}$ in $\Coh((A_{i})_{g_{ij}})$ which satisfy the cocycle condition.   $\Coh(U)$ is a category of $N_{i} \in \Coh((A_{i})_{f_{i}})$ and gluing isomorphisms $(N_{i})_{g_{ij}} \cong (N_{j})_{g_{ji}}$ in $\Coh((A_{i})_{(f_ig_{ij})})$ which satisfy the cocycle condition.    $\Coh(\widehat{X}_{Z})$ is a category of finitely generated modules $F_{i} \in \Coh(\widehat{A_{i}})$ and gluing isomorphisms $(F_{i})_{\{g_{ij}\}} \cong (F_{j})_{\{g_{ji}\}}$ in $\Coh((\widehat{A_{i}})_{\{g_{ij}\}})$ which satisfy the cocycle condition.  Finally, $\Coh(W)$ is isomorphic to the category of finitely generated modules $G_{i}\in\Coh((\widehat{A_{i}})_{f_{i}})$, together with gluing isomorphisms of the induced modules in $\Coh((\widehat{(A_{i})_{g_{ij}}})_{f_{i}})$ which satisfy the cocycle condition.  Notice that in the above description, we implicitly used the isomorphisms $(A_i)_{g_{ij}} \cong (A_j)_{g_{ji}}, (A_{i})_{(f_ig_{ij})} \cong (A_{j})_{(f_jg_{ji})},  (\widehat{A_{i}})_{\{g_{ij}\}} \cong (\widehat{A_{j}})_{\{g_{ji}\}},$ and $(\widehat{(A_{i})_{g_{ij}}})_{f_{i}} \cong (\widehat{(A_{j})_{g_{ji}}})_{f_{j}}$. On the affine level, the four functors appearing in the fiber product (\ref{FibProd}) can be translated via this description into functors $\Coh(A_{i}) \to \Coh(\widehat{A_{i}})$, $\Coh(\widehat{A_{i}}) \to \Coh((\widehat{A_{i}})_{f_{i}})$, $\Coh(A_{i}) \to \Coh((A_{i})_{f_{i}})$, and $\Coh((A_{i})_{f_{i}}) \to \Coh((\widehat{A_{i}})_{f_{i}})$. Each of these functors is simply a tensor product with the appropriate ring.  Therefore, locally, the descent setup we described is isomorphic to descent found either in \cite{A} or in \cite{BL2} and discussed in Section \ref{relatedsec}.
\end{exam}

\begin{exam}\label{ProjSpace}
The key to most applications of Theorem \ref{MainThm} will be an understanding of the space $W$.  For instance, to study vector bundles, it will be important to understand the group of invertible matrices with values in the functions on $W$.  We give here a consistency check that shows the Theorem \ref{MainThm} makes sense in a simple example.  We use the groups $\Pic$ which assign to a space the group of line bundles modulo equivalence.  Let $k$ be any field and let $X=\mathbb{P}^{r}_{k}$, where $r \geq 1$ and let $\mathbb{P}^{r-1}_{k} =Z \into X$ be the inclusion of a hyperplane.    The formal scheme completion of $X$ along $Z$ is denoted by $\gtX=\widehat{X}_{Z}$.  Notice that $U=X-Z=\mathbb{A}^{n}_{k}$, and so $\Pic(U)=\{1\}$.
Consider the sequence of groups given by the restriction maps
\[\Pic(X) \to \Pic(\widehat{X}_{Z}) \to \Pic(Z).
\]
When $r>1$ the composition is an isomorphism so the first map is an injection and the second map is a surjection even though the middle term is infinite dimensional.  When $r=1$ the second and third terms are trivial.  Notice that Theorem \ref{MainThm} implies that the kernel of the first map is precisely the double quotient
\begin{equation} \mathcal{O}(U)^{\times} \backslash \mathcal{O}(W)^{\times}/\mathcal{O}(\widehat{X}_{Z})^{\times}.
\end{equation}
An element in $\mathcal{O}(W)^{\times}$ can be seen as defining a line bundle on $X$ which is glued from the trivial line bundle on $\widehat{X}_{Z}$ and the trivial line bundle on $U$.  Therefore, the equivalence classes of line bundles on $X$ are the quotient of  $\mathcal{O}(W)^{\times}$ by the automorphisms of the trivial line bundle on  $\widehat{X}_{Z}$ and on $U$.  Notice that we have $\mathcal{O}(U)^{\times}=k^{\times}$ and it is not hard to see that $\mathcal{O}_{\gtX} \cong \prod^{\infty}_{m=0} \mathcal{O}_{\mathbb{P}_{k}^{r-1}}(-m)$ as a sheaf of $\mathcal{O}_{\mathbb{P}^{r-1}_{k}}$-modules.  Therefore $\mathcal{O}(\widehat{X}_{Z})=\text{H}^{0}(\mathbb{P}^{r-1}_{k}, \prod^{\infty}_{m=0} \mathcal{O}_{\mathbb{P}^{r-1}_k}(-m))$ and so $\mathcal{O}(\widehat{X}_{Z})^{\times}=\mathcal{O}(Z)^{\times}= k^{\times}$ when $r>1$ and $\mathcal{O}(\widehat{X}_{Z})^{\times}=k[[t]]^{\times}$ when $r=1$.  Therefore, we would like to check directly that this kernel is the correct group, in other words,
\begin{equation}
\begin{split}
 \mathcal{O}(W)^{\times}/k[[t]]^{\times}
                             & \cong \mathbb{Z} \ \  when \ \ r=1 \\
 \mathcal{O}(W)^{\times}/k^{\times}                             & \cong  \{1\} \ \ when \ \ r >1.
\end{split}
\end{equation}
We should have $ \mathcal{O}(W)^{\times}=k((t))^{\times}$ for $r=1$ and $ \mathcal{O}(W)^{\times}=k^{\times}$ for $r>1$.
In this example, for $r>1$, we have an exact sequence
\[ \{1\} \to \Pic(X) \to \Pic(\widehat{X}_{Z}) \to \Pic(W)
.\]
Even though $\Pic(\widehat{X}_{Z})$ is huge for $r>1$, only a subgroup isomorphic to $\mathbb{Z}$ is in the kernel of the restriction map to $\Pic(W)$.

Now let us, indeed, compute $\calO(W)$ and verify the above predictions. Let $X=\mathbb{P}^{r}_{k}=\Proj(x_0\. x_r)$ and fix a hyperplane $Z=V(x_0)=\mathbb{P}^{r-1}_{k}=\Proj(x_1\. x_r)$. Set $\gtX=\hatX_Z$ and $W=\gtX_\infty=\gtX_\eta-(\gtX_s)_\eta$ (we use notation from Remark \ref{Wrem}). For any $1\le i\le r$ let $Z_i=\mathbb{A}^{r-1}_k=\Spec(k[\frac{x_1}{x_i}\.\frac{x_r}{x_i}])$ be the non-vanishing locus of $x_i$. The open affine covering $Z=\cup_{i=1}^rZ_i$ induces an open affine covering of $\gtX$ by formal schemes $\gtX_i=\Spf(k[[\frac{x_0}{x_i}]]\{\frac{x_1}{x_i}\.\frac{x_r}{x_i}\})$, where we substitute $1$ instead of $\frac{x_i}{x_i}$. Let $\emptyset\neq I\subseteq\{1\.r\}$ then $\gtX_I:=\cap_{i\in I}\gtX_i=\Spf(k[[\frac{x_0}{x_{i_0}}]]\{\frac{x_j}{x_k}\})$, where $i_0$ is any element of $I$, $j$ and $k$ run over all pairs with $1\le j\le r$, $k\in I$, and the fractions $\frac{x_j}{x_k}$ satisfy all natural relations (e.g. $\frac{x_j}{x_k}\frac{x_k}{x_l}=\frac{x_j}{x_l}$).

It follows from Lemma \ref{affcaselem} that $W_I:=(\gtX_I)_\infty$ satisfies $\calO(W_I)=k((\frac{x_0}{x_{i_0}}))\{\frac{x_j}{x_k}\}$, where the convergent power series over $k((\frac{x_0}{x_{i_0}}))$ form a Tate algebra (discussed in \S\ref{kAffinoid}) over the field $k((\frac{x_0}{x_{i_0}}))$. In particular, we can compute $\calO(W)$ as $\cap_{i=1}^r\calO(W_i)$ (more precisely, we use the information about the gluing homomorphisms $\calO(W_i)\to\calO(W_{i,j})$) and after a simple computation one obtains that $\calO(W)=k[\frac{x_1}{x_0}\.\frac{x_r}{x_0}]$ whenever $r>1$. In particular, $\calO^\times(W)=k^\times$, as predicted earlier. For $r=1$, no computation is needed as we have that $\calO(W)=\calO(W_1)=k((\frac{x_0}{x_1}))$.
\end{exam}

\subsection{Discussion}
Theorem \ref{MainThm} can be applied to the study of stacks of vector bundles on algebraic surfaces $X$ over the site of $k$-schemes of finite type.  The simplest case is when $Z$ is taken to be a rational curve inside $X$.  The local behavior of $\widehat{X}_{Z}$ depends very much on the self-intersection number of the curve, if the curve is contractible (as in the case of negative self-intersection) then a reasonable notion of a formal neighborhood can be constructed as an actual scheme instead of a formal scheme.  In the case of negative self-intersection the contraction is a (singular) surface.  D. Harbater suggested to the first author that one can define a scheme version of the formal neighborhood as the fiber product (over the contraction) of the surface with the formal completion of the contraction at the image point of the curve.  This is precisely the type of descent which was considered by Artin in \cite{A}.  The case of negative self-intersection is opposite to Example \ref{ProjSpace} in the sense that with negative self-intersection one has $\Pic(\widehat{X}_{Z})=\Pic(Z)$ but the space of functions $\mathcal{O}(\widehat{X}_{Z})$ is huge, whereas in Example \ref{ProjSpace} it is the other way around: $\mathcal{O}(\widehat{X}_{Z}) = \mathcal{O}(Z)$ while $\Pic(\widehat{X}_{Z})$ is huge.  A local study of stacks of vector bundles on this type of scheme (equivalently on $\widehat{X}_{Z}$) was done in \cite{locsur}.  One might try to produce a general construction by taking the relative Spec over $X$ of the {\it coherator} of the sheaf of functions on  $\widehat{X}_{Z}$ thought of as a sheaf of $\mathcal{O}_{X}$-modules.  The coherator converts $\mathcal{O}_{X}$-modules to quasi-coherent $\mathcal{O}_{X}$-modules.   It was introduced by Grothendieck and a review of its properties can be found in Appendix B of \cite{TT}.  However when applied to the case of positive self-intersection in situation such as in Example \ref{ProjSpace} for $r=2$, this method would fail because the coherator is very small and represents functions defined on some Zariski open set containing $Z$.  Fixing this situation by giving a general construction that works with any subvariety and over any field was one of the motivations of the present paper.  Even in the case that $k=\mathbb{C}$ and $Z$ and $X$ are smooth, it is not clear how to replicate the results in this article using the methods of the classical theory of complex manifolds.  This is because the direct limit of the categories $\Coh(V)$ for $V$ running over open sets in the classical topology which contain $Z$ need not coincide with $\Coh(\widehat{X}_{Z})$.  A descent statement using this kind of direct limit will appear in \cite{BB}.  The advantage of involving $\Coh(\widehat{X}_{Z})$ is that it is described (Chapter II, Proposition 9.6 of \cite{Ha}) as being an inverse limit of the categories of coherent sheaves over finite length infinitesimal neighborhoods of $Z$.

\end{document}